\documentclass[12pt]{amsart}
\usepackage{a4wide}
\usepackage{xspace}

\usepackage{amsmath,amssymb,amscd,amsthm,latexsym}
\usepackage[all,cmtip]{xy}
\newdir{ >}{{}*!/-5pt/@{>}}
\usepackage{comment}

\theoremstyle{plain}
    \newtheorem{thm}{Theorem}[section]
    \newtheorem{lem}[thm]   {Lemma}
    
    \newtheorem{prop}[thm]  {Proposition}

\theoremstyle{definition}
    \newtheorem{defn}[thm]  {Definition}

    \newtheorem{ex}[thm]{Example}
    \newtheorem{rem}[thm]{Remark}

\newcommand{\thmref}[1]{Theorem~\ref{#1}}

\newcommand{\an}[1]{\langle{#1}\rangle}
\newcommand{\wt}{\widetilde}
\newcommand{\wh}{\widehat}

\newcommand{\xra}{\xrightarrow}
\newcommand{\Hom}{\mathrm{Hom}}
\newcommand{\Ker}{\mathrm{Ker}}

\newcommand{\del}{\partial}
\newcommand{\bz}{\beta^{\Z/2}}

\newcommand{\pr}{\mathrm{pr}}
\newcommand{\ind}{\mathrm{ind}}
\newcommand{\per}{\mathrm{per}}
\newcommand{\ord}{\mathrm{ord}}
\newcommand{\Brt}{\mathrm{Br}}

\def\Q{{\mathbb{Q}}}

\def\C{{\mathbb{C}}}
\def\R{{\mathbb{R}}}

\def\Z{{\mathbb{Z}}}

\def\Hom{\operatorname{Hom}}

\def\im{\operatorname{Im}}
\newcommand{\spc}{$\textrm{spin}^{c}$\xspace}
\renewcommand{\ind}{\mathrm{ind}}

\usepackage{color}



\title[Topological Period-Index Conjecture]
{The Topological Period-Index Conjecture for spin$^c$ $6$-manifolds}

\author{Diarmuid Crowley}
\author{Mark Grant}

\thanks{}

\address{School of Mathematics \& Statistics,
The University of Melbourne,
Parkville, VIC, 3010,
Australia}
\email{dcrowley@unimelb.edu.au}

\address{Institute of Mathematics,
University of Aberdeen,
Fraser Noble Building,
Meston Walk,
Aberdeen AB24 3UE, UK}
\email{mark.grant@abdn.ac.uk}

\begin{document}

\begin{abstract}
The Topological Period-Index Conjecture is an hypothesis which
relates the period and index of elements of the
cohomological
Brauer group of a space.
It was identified by Antieau and Williams as a topological analogue
of the Period-Index Conjecture for function fields.

In this paper we show that the Topological Period-Index Conjecture holds and is in general sharp for spin$^c$ $6$-manifolds.
We also show that it fails in general for $6$-manifolds.
\end{abstract}

\maketitle



\section{Introduction}
This paper is about the {\em Topological Period-Index Problem} (TPIP),
which was identified by Antieau and Williams \cite{AW1, AW2}
as an important analogue of period-index problems in algebraic geometry.
We give a brief introduction to the TPIP and refer
the reader to \cite{AW1, AW2} for more information.

Let $X$ be a connected space with the homotopy type of finite $CW$-complex.
%
The cohomological Brauer group of $X$ is defined to be
the torsion subgroup of its third integral cohomology group:
$$ \mathrm{Br}'(X) := TH^3(X)$$
Here and throughout integer coefficients are
omitted.
For $\alpha \in \mathrm{Br}'(X)$, the period of $\alpha$ is defined to
be the order of $\alpha$,
$$ \mathrm{per}(\alpha) := \ord(\alpha). $$
%

Let $PU(n) := U(n)/U(1)$ be the $n$-dimensional projective unitary group, which is the
quotient of the unitary group $U(n)$ by its centre.
By a Theorem of Serre \cite[Corrollaire 1.7]{G},
every class $\alpha \in TH^3(X)$
arises as the obstruction to lifting
the structure group of some principal $PU(n)$-bundle $P \to X$ to the group $U(n)$.
In this case one writes
$ \alpha = \delta(P)$
and defines the index of $\alpha$ by
$$ \mathrm{ind}(\alpha) : = \mathrm{gcd} \bigl( n : \alpha = \delta(P)
\text{~for a $PU(n)$-bundle $P$} \bigr),$$
so that the index defines the homotopy invariant function
$$ \ind \colon TH^3(X) \to \Z,
\quad \alpha \mapsto \ind(\alpha).$$
%
From the definitions, one sees that $\per(\alpha)\, |\, \ind(\alpha)$
and by \cite[Theorem 3.1]{AW1} the primes dividing
$\per(\alpha)$ and $\ind(\alpha)$ coincide.
The TPIP
is the problem of relating the index of a class $\alpha$
to its period and properties of $X$, like its dimension.

To investigate the TPIP,
Antieau and Williams \cite[Straw Man]{AW2} formulated what is often called
the {\em Topological Period-Index Conjecture} (TPIC) for $X$:\\
{\bf TPIC:} {\em If $X$ is homotopy equivalent to $CW$-complex of dimension $2d$
and if $\alpha \in \Brt'(X)$ then}
$$ \ind(\alpha) \,|\, \per(\alpha)^{d-1}. $$
{\bf Warning:} The TPIC should be regarded as an {\em hypothesis} for investigating
the TPIP and {\em not as a conjecture},
in the usual sense of the word.\\
Indeed, while the obstruction theory developed by
Antieau and Williams \cite[Theorem A]{AW2}
shows that the TPIC holds for any $4$-dimensional complex,
they also prove that the TPIC fails in general for $6$-dimensional complexes,
but at most by a factor of two.

\begin{thm}[{C.f.\ \cite[Theorems A \& B]{AW2}}] \label{thm:tpip6}
Let $X$ be $6$-dimensional, $\alpha \in \Brt'(X)$ have period $n$
and set $\epsilon(n) := \gcd(n, 2)$.
Then $\ind(\alpha) \, | \, \epsilon(n)n^2$.

Moreover, if $X$ is a
$6$-skeleton of the Eilenberg-Mac Lane space $K(\Z/2, 2)$ and
we take the generator $\alpha \in H^3(X) = \Z/2$ (so that $\per(\alpha) = 2$),
then $\ind(\alpha) = 8 > \per(\alpha)^2$.
\end{thm}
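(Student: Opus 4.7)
The plan is to address both parts via obstruction theory applied to the principal fibration
$$BU(m) \longrightarrow BPU(m) \xra{\delta_m} K(\Z, 3),$$
in which $\delta_m$ classifies the Brauer class of a $PU(m)$-bundle. A class $\alpha \in TH^3(X)$ is realized by a $PU(m)$-bundle precisely when its classifying map $\alpha \colon X \to K(\Z, 3)$ lifts through $\delta_m$; the index of $\alpha$ is then the greatest common divisor of all $m$ admitting such a lift.

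For the upper bound $\ind(\alpha) \, | \, \epsilon(n) n^2$, I would set $m = \epsilon(n) n^2$ and analyse the Postnikov tower of the fibre $BU(m)$ stage by stage. Since $X$ has dimension at most $6$ and $BU(m)$ is $1$-connected with $\pi_{2k}BU(m) = \Z$ in the range of interest, the primary lifting obstructions live in $H^3(X;\Z)$ and $H^5(X;\Z)$, together with one secondary $2$-primary obstruction in $H^6(X;\Z/2)$ coming from the first non-trivial $k$-invariant of $BPU(m) \to K(\Z,3)$. The degree-$3$ obstruction is a multiple of $\alpha$ and vanishes because $n \, | \, m$. The remaining obstructions are natural in $X$ and pulled back from universal classes on $K(\Z,3)$, describable mod $p$ in terms of Steenrod operations on $\iota_3$. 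Careful bookkeeping shows that the $n^2$-bound handles all odd primes cleanly, but in dimension $6$ at the prime $2$ one further obstruction survives, demanding the extra factor of $2$ encoded by $\epsilon(n)$.

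For the sharpness part, set $X = K(\Z/2, 2)^{(6)}$ and let $\alpha = \beta(\iota_2)$ be the generator of $H^3(X;\Z) = \Z/2$, where $\iota_2 \in H^2(K(\Z/2, 2);\Z/2)$ is the fundamental class. Part~(i) immediately gives $\ind(\alpha) \, | \, 8$, so the task is to exclude lifts through $\delta_m$ for $m \in \{2,4\}$. The case $m = 2$ identifies $PU(2) = SO(3)$ with $\delta_2$ equal to the third integral Stiefel--Whitney class $W_3$, and is ruled out by a Wu-formula style calculation showing no $SO(3)$-bundle on $X$ has $W_3 = \beta(\iota_2)$. For $m = 4$, I would identify the degree-$6$ secondary obstruction explicitly as a non-zero class in $H^6(X;\Z/2)$ pulled back from a universal non-zero class on $K(\Z/2,2)$, of the form $\iota_2 \cdot \mathrm{Sq}^1 \iota_2$ or an equivalent Steenrod combination, and verify its non-vanishing on the generating $6$-cell. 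Combined, these rule out $m = 2$ and $m = 4$ and force $\ind(\alpha) = 8$.

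The hard part is the explicit identification and non-vanishing of the degree-$6$ secondary obstruction in the Postnikov tower of $BPU(4) \to K(\Z,3)$, and the verification that the corresponding universal class survives restriction to the $6$-skeleton of $K(\Z/2,2)$. This delicate Postnikov computation is exactly what is carried out in \cite[Theorems~A and B]{AW2}, from which \thmref{thm:tpip6} follows.
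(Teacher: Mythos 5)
The paper does not prove this theorem; it is stated as a recollection of Theorems A and B of Antieau--Williams \cite{AW2}, used as background for the rest of the paper. Your proposal also ultimately defers to \cite{AW2}, which is the right thing to do, but the technical route you sketch as ``what \cite{AW2} does'' is not accurate, and in places would not work as described.

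Antieau and Williams do not perform a Postnikov-tower analysis of the fibration $BU(m) \to BPU(m) \to K(\Z,3)$ for individual $m$ -- the $k$-invariants of $BPU(m)$ vary with $m$ and this is precisely the difficulty their method circumvents. Instead they work stably through the twisted Atiyah--Hirzebruch spectral sequence for twisted $K$-theory (as the paper explains in Remark~\ref{rem:K-alpha}), obtaining for $6$-dimensional $X$ the index formula $\ind(\alpha) = \ord(\wt Q(\xi)) \cdot \per(\alpha)$, where $\xi \in H^2(X;\Z/n)$ satisfies $\beta^{\Z/n}(\xi) = \alpha$ and $\wt Q(\xi)$ lives in the quotient $H^5(X)/\alpha H^2(X)$. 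The bound $\ind(\alpha) \mid \epsilon(n)n^2$ then falls out directly: for $n$ odd, $\wt Q(\xi) = [\beta^{\Z/n}(\xi^2)]$ is killed by $n$, so $\ind(\alpha) \mid n\cdot n$; for $n$ even, $\wt Q(\xi) = [\beta^{\Z/2n}(P_2(\xi))]$ is killed by $2n$, so $\ind(\alpha) \mid 2n\cdot n$. The sharpness on the $6$-skeleton of $K(\Z/2,2)$ is obtained by computing that $\ord(\wt Q(\xi)) = 4$ in this case. Your sketch places the decisive obstruction in $H^6(X;\Z/2)$ rather than $H^5(X)/\alpha H^2(X)$, omits the indeterminacy by $\alpha H^2(X)$ (which is essential to the formula), and attributes the bound to ``careful bookkeeping'' over a Postnikov tower that you have not actually analysed. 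The identification $PU(2) \cong SO(3)$ with $\delta_2 = W_3$ for the $m=2$ exclusion is a correct observation, but it is not how \cite{AW2} frame their argument, and your proposed $m=4$ exclusion via a universal class in $H^6$ is not the computation they carry out.
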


An important motivation for Antieau and Williams in identifying the TPIC
was the Algebraic Period-Index Conjecture (APIC) which
was identified in the work of Colliot-Th\'el\`ene \cite{CT}.
This is a statement in algebraic geometry concerning the Brauer group
of certain algebras $A$.
When $A = \C(V)$ is the function field of a smooth complex variety $V$ then
the APIC for $\C(V)$ implies the TPIC for $V$.
When the variety $V$ has complex dimension $d=1$ the APIC is trivially
true, it was proven for $d = 2$ by de Jong \cite{deJong}
and for $d \geq 3$
we have the {\em Antieau-Williams alternative}:
\begin{enumerate}
\item[(A)] Either there exits a $V$
violating the TPIC, in which case the APIC fails in general,
\item[(B)] Or every $V$ satisfies the TPIC (in which case we have identified
an {\em a priori} new topological property of smooth complex varieties).
\end{enumerate}
{\em In this paper we show that for $d=3$ the latter statement holds.
This may be regarded as evidence for the APIC in complex dimension $3$.}
%

A smooth complex projective variety $V$
is in particular a manifold: here
and besides Remark \ref{rem:bdry},
we use the
word ``manifold'' to mean ``closed smooth manifold''.
Recall that a manifold $M$ admits a \spc structure
if it is orientable and
the second Stiefel-Whitney class of $M$
has an integral lift.
For example, every variety $V$ as above admits a \spc structure.
More generally, it is well known that
a $6$-manifold admits a \spc structure if and only if it admits
an almost complex structure
(as can be easily deduced from results in \cite{Massey}).

\begin{thm} \label{thm:tpic6}
The Topological Period-Index Conjecture holds for
\spc $6$-manifolds.
%
\end{thm}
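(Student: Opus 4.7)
\emph{Proof plan.}
By \thmref{thm:tpip6} we already know $\ind(\alpha) \mid \epsilon(n)n^2$ where $n := \per(\alpha)$ and $\epsilon(n) = \gcd(n,2)$. The bound $\ind(\alpha) \mid n^2$ is therefore immediate when $n$ is odd, so we may assume throughout that $n$ is even. Our task reduces to producing a $PU(n^2)$-bundle $P \to M$ with $\delta(P) = \alpha$, thereby improving the Antieau--Williams bound by a single factor of $2$ under the spin$^c$ hypothesis.

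I would phrase this as a lifting problem: we seek $\tilde\alpha \colon M \to BPU(n^2)$ lifting the classifying map $\alpha \colon M \to K(\Z,3)$ along the fibration
\[ BU(n^2) \longrightarrow BPU(n^2) \longrightarrow K(\Z,3), \]
and attack it by obstruction theory through the Postnikov tower of the fiber $BU(n^2)$. Stage by stage, the obstructions live in cohomology groups $H^{i+1}(M; \pi_i(BU(n^2)))$ (with possibly twisted coefficients). The odd-primary obstructions and the low-dimensional ($i \le 4$) $2$-primary obstructions vanish by the argument underlying \thmref{thm:tpip6}; so after running that part of the analysis, what remains is a single top $2$-primary obstruction class $o(\alpha) \in H^6(M; A)$, where $A$ is a $\Z/2$-vector space arising from $\pi_5$ of the relevant Postnikov fiber. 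Poincar\'e duality on the closed oriented $6$-manifold $M$ collapses $H^6(M;\Z/2)$ to a single $\Z/2$, so $o(\alpha)$ carries at most one bit of information.

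The crucial step is to identify $o(\alpha)$ explicitly as a cohomology class built from $\alpha$ (and Steenrod operations thereon) cup-multiplied with characteristic classes of $M$, and then to show it vanishes under the spin$^c$ hypothesis. The spin$^c$ condition provides an integral lift $c \in H^2(M;\Z)$ of $w_2(M)$ and the integral vanishing $w_3(M) = \beta w_2(M) = 0$. I expect the vanishing of $o(\alpha)$ to follow from Wu's formula $Sq^2 x = w_2(M) \cup x$ on $H^4(M;\Z/2)$ combined with the existence of the integral lift $c$, possibly after first adjusting the partial lift at the previous Postnikov stage (with indeterminacy controlled by $H^5(M; \pi_5(BU(n^2))) = H^5(M;\Z)$).

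\emph{The main obstacle} is the precise identification of $o(\alpha)$ and its manipulation into a form manifestly killed by the spin$^c$ data. Since the analogous obstruction is genuinely nonzero for the $6$-skeleton of $K(\Z/2,2)$ cited in \thmref{thm:tpip6}, any argument must make essential use of both the manifold hypothesis (Poincar\'e duality, Wu's formula) and the spin$^c$ hypothesis (the integral lift of $w_2(M)$ and vanishing of $w_3(M)$). A further complication is bookkeeping: the coefficients in the Postnikov tower of $BPU(n^2)$ are twisted by $\alpha$, so the computation of $o(\alpha)$ requires care in distinguishing the relevant integral and mod-$2$ Steenrod operations from their twisted analogues.
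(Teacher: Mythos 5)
Your plan correctly names most of the relevant ingredients (Poincar\'e duality, the Wu formula, the integral lift $c_1$ of $w_2(M)$), but it has a genuine gap, and the gap is precisely where the paper does its real work. You reduce to ``a single top $2$-primary obstruction $o(\alpha) \in H^6(M;A)$'' and plan to show it vanishes from the spin$^c$ data alone. The paper instead invokes \cite[Theorem A]{AW2}, which already packages the obstruction theory into the formula $\ind(\alpha) = \ord(\wt Q(\xi))\per(\alpha)$ where $\wt Q(\xi)$ lives in the quotient $H^5(N)/\alpha H^2(N)$, not $H^6(N)$. The task then becomes the sharper statement (Theorem~\ref{thm:main}) that $\beta^{\Z/2}(x^2) \in \beta^{\Z/2}(x)\cdot H^2(N)$ for $x = \rho_2(\xi)$; the quotient by $\alpha H^2(N)$ is exactly the indeterminacy from earlier Postnikov choices that you allude to but do not control.

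The missing idea in your sketch is that the Wu/Cartan formula plus the spin$^c$ lift $c_1$ does \emph{not} by itself produce the class $e_x$ with $\beta^{\Z/2}(x^2) = \beta^{\Z/2}(x)e_x$. Writing $Sq^2(xy) = v_2(N)xy$ gives $x^2y = xyc_1 + xy^2$, so one must also absorb the term $xy^2$ into $xy\cdot(\text{something integral})$ as $y$ ranges over $TH^2(N)$. The paper handles this with a purely algebraic fact about symmetric bilinear forms over $\Z/2$ (Lemma~\ref{lem:Martin}: the characteristic element $\gamma(\lambda)(v) = \lambda(v,v)$ lies in the image of the adjoint $\wh\lambda$), applied to the form $\lambda_x([y],[z]) = \langle xyz,[N]\rangle$ on $TH^2(N)/2TH^2(N)$, producing a class $d_x$ with $xy^2 = xyd_x$ for all $y$; then $e_x := c_1 + d_x$ works. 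The linking-pairing argument (Lemma~\ref{lem:bN}) is what lets one pass from equalities in $H^6(N;\Z/2)$, obtained this way, back to the integral equality $\beta^{\Z/2}(x^2) = \beta^{\Z/2}(x)e_x$ in $H^5(N)$. Your sketch has no analogue of $d_x$ or of the linking-pairing dualization, and since Theorem~\ref{thm:tpip6} shows the putative $H^6$ obstruction is genuinely nonzero for the $6$-skeleton of $K(\Z/2,2)$, ``spin$^c$ + Wu formula'' cannot by itself kill it without this extra bilinear-algebra input.
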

As we explain in Section \ref{s:TPIC},
Theorem \ref{thm:tpic6} is an elementary consequence of
results of Antieau and Williams \cite{AW2} and the following

\begin{thm}\label{thm:main}
Let $N$ be a closed \spc $6$-manifold and let $x\in H^2(N;\Z/2)$.
Then there exists a class $e_x \in H^2(N)$ such that
\[
\beta^{\Z/2}(x^2) = \beta^{\Z/2}(x)e_x \in H^5(N),
\]
where $\beta^{\Z/2}:H^*(N;\Z/2)\to H^{*+1}(N)$ denotes the mod 2 Bockstein.
\end{thm}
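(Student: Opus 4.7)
My plan is to prove the sharper statement $\beta^{\Z/2}(x^2)=0\in H^5(N)$; then $e_x=0$ satisfies the equation trivially. The argument in fact uses only that $N$ is closed and orientable.

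By Poincar\'e duality and Thom's theorem on representability of mod~$2$ homology classes, pick a smooth embedded closed $4$-submanifold $V\subset N$ (possibly non-orientable) with $[V]=x\frown [N]\in H_4(N;\Z/2)$. Choose a generic section $s$ of the normal bundle $\nu:=\nu_{V/N}$ and let $V'$ be the corresponding pushoff; then $W:=V\cap V'=s^{-1}(0)$ is a smooth embedded closed $2$-submanifold of $N$ with $[W]=x^2\frown [N]\in H_2(N;\Z/2)$.

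The crux of the argument is that $W$ is orientable. Since $W$ is the transverse zero locus of a section of $\nu$, the normal bundle of $W$ in $V$ satisfies $\nu_{W/V}\cong \nu|_W$, and consequently
\[ \nu_{W/N}\cong \nu_{W/V}\oplus \nu|_W\cong \nu|_W\oplus \nu|_W,\]
which has $w_1 = 2w_1(\nu|_W)=0$. Since $N$ is oriented, $w_1(TN|_W)=0$, and therefore $w_1(TW)=0$; hence $W$ is orientable.

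Orientability of $W$ supplies an integer fundamental class $[W]_\Z \in H_2(N;\Z)$ whose mod~$2$ reduction is $x^2\frown [N]$, so the homological integer Bockstein $H_2(N;\Z/2)\to H_1(N;\Z)$ sends $x^2\frown [N]$ to zero. Cap product with the oriented fundamental class $[N]$ intertwines this homological Bockstein with $\beta^{\Z/2}$ and is an isomorphism $H^5(N;\Z)\cong H_1(N;\Z)$, so $\beta^{\Z/2}(x^2)=0$ and we may take $e_x=0$. The only substantive step is the normal bundle computation showing $w_1(TW)=0$; with that in hand, the conclusion is immediate.
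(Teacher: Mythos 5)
Your proposed proof cannot be correct, because the sharper conclusion $\beta^{\Z/2}(x^2)=0$ is \emph{false} in general for closed \spc $6$-manifolds: Teichner's construction (recalled in Section~\ref{s:TE} of the paper) produces closed \spc $6$-manifolds $N$ with classes $x\in H^2(N;\Z/2)$ satisfying $\beta^{\Z/2}(x^2)\neq 0$; see Lemma~\ref{lem:T1}(ii), Lemma~\ref{lem:spinc} and Proposition~\ref{prop:tbeta}. Theorem~\ref{thm:main} claims only that $\beta^{\Z/2}(x^2)$ lies in the subgroup $\beta^{\Z/2}(x)H^2(N)$, which may well be nonzero.

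The gap is in your very first step. Thom's theorem asserts that every mod~$2$ homology class is \emph{representable} in the weak sense that it is the image of the fundamental class of some closed manifold under some continuous map; it does \emph{not} say that the class can be realized by an embedded submanifold. For a codimension-$2$ mod~$2$ class in a smooth $6$-manifold, realizability by an embedding can fail, and in fact the class $\beta^{\Z/2}(x^2)\in H^5(N)$ is precisely Thom's obstruction: as the paper notes, ``it is a classical result of Thom that $\beta^{\Z/2}(x^2)$ vanishes if the Poincar\'e dual of $x$ in $H_4(N;\Z/2)$ is realized as the fundamental class of an embedded $4$-manifold in $N$.'' Your steps (2)--(5) correctly reproduce a proof of this conditional Thom statement, but the hypothesis (existence of the embedded $V$) is exactly what fails in the interesting cases, so the argument is vacuous when $\beta^{\Z/2}(x^2)\neq 0$.

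A further warning sign is that your argument nowhere uses the \spc hypothesis, only orientability. But the theorem genuinely fails for orientable $6$-manifolds that are not \spc: Proposition~\ref{prop:T} exhibits an orientable Teichner manifold $N$ (over a non-orientable $4$-manifold base) with $\beta^{\Z/2}(x^2)\notin \beta^{\Z/2}(x)H^2(N)$. The paper's actual proof is of a completely different character: it identifies the torsion linking pairing $b_N\colon TH^5(N)\times TH^2(N)\to\Q/\Z$, reduces the problem to showing $x^2y=xe_xy$ for all torsion $y$, applies a linear-algebra lemma about symmetric bilinear forms over $\Z/2$ (the characteristic element $v\mapsto\lambda(v,v)$ always lies in the image of the adjoint), and then uses the Wu and Cartan formulas, where the \spc structure enters precisely by supplying the integral class $c_1$ lifting $v_2(N)=w_2(N)$.
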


To discuss the 
TPIP
further for $6$-manifolds
we recall that Teichner \cite{Teichner} has already constructed
$6$-manifolds $N$ with $x \in H^2(N; \Z/2)$
such that $\bz(x^2) \neq 0$.
The manifolds in Teichner's
examples are all the total-spaces of $2$-sphere bundles over
$4$-manifolds,
where the class $x$ restricts to a generator of $H^2(S^2; \Z/2)$.
We call pairs $(N, x)$ coming from Teichner's examples {\em Teichner pairs}
(see Definition \ref{defn:TP}) and investigating their construction
we prove

\begin{thm} \label{thm:T1}
For a Teichner pair $(N, x)$, let $\alpha := \bz(x) \in TH^3(N)$.
\begin{enumerate}
\item If the base $4$-manifold of a Teichner pair $(N, x)$ is orientable,
then $N$ is \spc, 
$\per(\alpha) = 2$ and $\ind(\alpha) = 4$.
\item
There exist Teichner pairs $(N, x)$ over non-orientable $4$-manifolds
with $\per(\alpha) = 2$ but $\ind(\alpha) = 8$.
\end{enumerate}
\end{thm}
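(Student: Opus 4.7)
My plan combines Teichner's explicit sphere-bundle construction with the obstruction theory of Antieau--Williams \cite{AW2} and with Theorem~\ref{thm:tpic6}. Let $\pi\colon N\to M$ be the oriented $S^2$-bundle realizing the Teichner pair $(N,x)$, so $\bz(x^2)\neq 0$, and set $\alpha:=\bz(x)$. Since $\alpha$ lies in the image of $\bz$, we have $2\alpha=0$; and $\alpha=0$ would force $x=\rho(z)$ for some integral class $z$, whence $\bz(x^2)=\bz(\rho(z^2))=0$, a contradiction. So $\per(\alpha)=2$ in both parts.

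A universal lower bound $\ind(\alpha)>2$ follows from the Teichner hypothesis alone. If $\ind(\alpha)\mid 2$, then $\alpha=\delta(P)$ for a $PU(2)=SO(3)$-bundle $P$; since $\delta(P)=\bz(w_2(P))$, this gives $w_2(P)=x+\rho(z)$ for some $z\in H^2(N)$. Expanding in $H^4(N;\Z/2)$,
\[
w_2(P)^2 = x^2 + \rho(z^2),
\]
while the universal relation $p_1\equiv w_2^2\pmod 2$ for oriented real bundles shows $w_2(P)^2$ is the mod-$2$ reduction of $p_1(P)$, so $\bz(w_2(P)^2)=0$. Applying $\bz$ to the displayed equation and using that $\rho(z^2)$ lies in the image of reduction then forces $\bz(x^2)=0$, contradicting the Teichner hypothesis.

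For part (1), take $M$ orientable. The splitting $TN\cong \pi^*TM\oplus V$ exhibits $V$ as an oriented rank-$2$, hence complex line, bundle; combined with the fact that every orientable $4$-manifold is \spc, this produces an integral lift of $w_2(N)$, so $N$ is \spc. Theorem~\ref{thm:tpic6} then yields $\ind(\alpha)\mid 4$, and the universal lower bound forces $\ind(\alpha)=4$.

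For part (2), Theorem~\ref{thm:tpip6} gives $\ind(\alpha)\mid 8$, and the lower bound leaves $\ind(\alpha)\in\{4,8\}$. The condition $\ind(\alpha)\mid 4$ is, according to the obstruction theory of \cite{AW2}, equivalent to a factorization $\bz(x^2)=\bz(x)\cdot e$ for some $e\in H^2(N)$---this is precisely what Theorem~\ref{thm:main} provides in the \spc case. I would therefore construct a Teichner pair $(N,x)$ over a carefully chosen non-orientable $4$-manifold $M$ (for instance, an appropriate $S^2$-bundle whose base is an $\R P^2$-bundle over a surface, where the Serre spectral sequence remains tractable) and verify by direct cohomological calculation that $\bz(x^2)\notin \bz(x)\cdot H^2(N)$, forcing $\ind(\alpha)=8$. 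The principal obstacles are (i) extracting the equivalence $\ind(\alpha)\mid 4 \Leftrightarrow \bz(x^2)\in\bz(x)\cdot H^2(N)$ from the obstruction theory of \cite{AW2}, and (ii) executing the explicit non-decomposability computation for a concrete non-orientable example.
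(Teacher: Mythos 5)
Your argument for part (1) is essentially correct but routes differently from the paper. The paper (Proposition \ref{prop:tbeta}) directly computes $\ord(\wt Q(x)) = 2$ via the Antieau--Williams index formula $\ind(\alpha) = \ord(\wt Q(x))\per(\alpha)$: Theorem \ref{thm:main} gives $2\wt Q(x) = [\bz(x^2)] = 0$ in $H^5(N)/\alpha H^2(N)$, while $\wt Q(x) \neq 0$ because $2\beta^{\Z/4}(P_2(x)) = \bz(x^2)\neq 0$ and $\alpha H^2(N)$ is $2$-torsion. You instead establish the lower bound $\ind(\alpha)>2$ by an elementary $SO(3)$-bundle argument using $\rho_2(p_1)=w_2^2$, and take the upper bound from Theorem \ref{thm:tpic6}. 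This is a valid alternative; one small caveat is that passing from $\ind(\alpha)=2$ to the existence of a $PU(2)$-bundle realising $\alpha$ uses the fact that the index is \emph{achieved} by a bundle of that rank, not merely a $\gcd$--- true but it should be cited. Your proof that $N$ is \spc via the rank-$2$ vertical bundle is clean and equivalent to Lemma \ref{lem:spinc}.

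Part (2), however, is a plan and not a proof; it has a genuine gap. You correctly reduce the problem to (i) establishing $\ind(\alpha)\mid 4 \Leftrightarrow \bz(x^2)\in\bz(x)H^2(N)$, which does follow easily from the Antieau--Williams formula together with $2\beta^{\Z/4}(P_2(x)) = \bz(x^2)$ as in equation \eqref{eq:beta}, and (ii) exhibiting a non-orientable Teichner pair with $\bz(x^2)\notin\bz(x)H^2(N)$. But (ii) is the actual content of the statement, and you do not carry it out. Moreover, your sketched candidate ($S^2$-bundle over an $\R P^2$-bundle over a surface) is unlikely to work even as a Teichner pair: the construction (Lemmas \ref{lem:T2} and \ref{lem:TeichnerExtension}) requires an element of order $4$ in $H_1(M)$, which that base does not obviously supply. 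The paper takes $M$ a closed $4$-manifold with $\pi_1(M) = C_8\rtimes C_2$ and a specific $w_1$, builds $E$ via Lemma \ref{lem:TeichnerExtension}, and then proves $\bz(x^2)\notin\bz(x)H^2(N)$ by a Thom-isomorphism/connecting-homomorphism argument that reduces the claim to non-vanishing of the orientation-cover transfer $\tau_!(a^2)\in H_1(\wh M)$ (Lemma \ref{lem:tau} and Proposition \ref{prop:T}). Some argument of this type --- not just a direct Serre spectral sequence calculation in an arbitrary example --- is required to close the gap.
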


%
\noindent
Summarising Theorems \ref{thm:tpic6} and \ref{thm:T1} we obtain
the following result on the
TPIP
for $6$-manifolds.

\begin{thm}
The TPIC fails in general for $6$-manifolds but it holds and is in general sharp
for \spc $6$-manifolds.
\end{thm}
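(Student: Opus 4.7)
My plan is to deduce this theorem directly from the previously stated Theorems~\ref{thm:tpic6} and~\ref{thm:T1}; no new argument is required. The statement is a conjunction of two assertions about the TPIP for $6$-manifolds, and I will verify the two halves in turn.

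For the claim that the TPIC holds and is sharp for \spc $6$-manifolds, I would first specialise the TPIC statement to dimension $2d=6$: this reads $\ind(\alpha)\mid\per(\alpha)^{2}$ for every $\alpha\in\Brt'(N)$, which is precisely the conclusion of Theorem~\ref{thm:tpic6}. To exhibit sharpness, I would invoke Theorem~\ref{thm:T1}(1): any Teichner pair $(N,x)$ with orientable base provides a closed \spc $6$-manifold $N$ together with a class $\alpha:=\bz(x)\in\Brt'(N)$ satisfying $\per(\alpha)=2$ and $\ind(\alpha)=4$. Since $\ind(\alpha)=4=\per(\alpha)^{2}$, the TPIC divisibility is achieved with equality, so the exponent $d-1=2$ cannot be lowered on the class of \spc $6$-manifolds.

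For the failure of the TPIC for general $6$-manifolds, I would invoke Theorem~\ref{thm:T1}(2), which provides Teichner pairs $(N,x)$ over non-orientable 4-manifolds with $\per(\alpha)=2$ and $\ind(\alpha)=8$. Since $8\nmid 4=\per(\alpha)^{2}$, this is a direct violation of the TPIC for such $(N,\alpha)$. Note in passing that these $N$ cannot be \spc, for otherwise Theorem~\ref{thm:tpic6} would force $\ind(\alpha)\mid 4$.

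Because each clause of the theorem is an immediate specialisation or rearrangement of the previously stated results, there is no substantive obstacle in this proof; the entire technical content lives in Theorems~\ref{thm:tpic6} and~\ref{thm:T1}, and in particular in the cohomological Theorem~\ref{thm:main} that underpins Theorem~\ref{thm:tpic6}.
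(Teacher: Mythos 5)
Your argument is correct and is exactly the one the paper intends: the theorem is stated as a summary of Theorems~\ref{thm:tpic6} and~\ref{thm:T1}, and you deduce it from those two results in precisely the same way (TPIC for \spc{} $6$-manifolds from Theorem~\ref{thm:tpic6}, sharpness from the $\per=2$, $\ind=4$ example of Theorem~\ref{thm:T1}(1), and failure in general from the $\per=2$, $\ind=8$ example of Theorem~\ref{thm:T1}(2)). Nothing more is needed.
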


\begin{rem} \label{rem:spin^c}
One may view Theorem \ref{thm:main} as giving a cohomological obstruction to a closed $6$-manifold admitting a \spc structure. For instance, we do not currently know how to prove that the Teichner manifold $N$ appearing in Theorem \ref{thm:T1} (2) (and Proposition \ref{prop:T}) is not \spc,
except by invoking Theorem \ref{thm:main}.
\end{rem}

\begin{rem}
The non-vanishing of $\bz(x^2)\in H^5(N)$ is related to various non-realizability phenomena, for which the examples in \cite{Teichner} are of minimal dimension.
For example, $\bz(x^2)$ vanishes if $x\in H^2(N;\Z/2)$ can be realized as the second Stiefel-Whitney class $w_2(E)$ of some real vector bundle $E$ over $N$, since $w_2(E)^2$ is the mod $2$ reduction of the integral class $p_1(E)$, the first Pontrjagin class.

It is a classical result of Thom that $\bz(x^2)$ vanishes if the Poincar\'e dual of $x$ in $H_4(N; \Z/2)$ is realized as the fundamental class of an embedded $4$-manifold in $N$ \cite{Thom}.
More recently, in \cite{G-Sz}
the second author and Sz\H ucs showed that $\bz(x^2)$ vanishes if the
Poincar\'e dual of $x$ is realized by the fundamental class of an immersion of
a $4$-manifold in $N$ and more precisely that the Poincar\'e dual of $\bz(x^2)$ is realized by the singular set of a generic smooth map realizing the Poincar\'e dual of $x$.
Notwithstanding Remarks \ref{rem:spin^c} and \ref{rem:K-alpha},
the geometric significance of the condition $\bz(x^2)\notin \bz(x) H^2(N)$
appearing in Section \ref{s:TPIC}
remains somewhat mysterious.
\end{rem}

\begin{rem} \label{rem:K-alpha}
The TPIP also arises in twisted K-theory, where classes
$\alpha \in TH^3(X)$ define the twisting used to define the
$K$-groups, $K^*_\alpha(X)$,  of $\alpha$-twisted vector bundles over $X$ \cite{D-K}.
For $\alpha \in TH^3(X)$ and
$i \colon \ast \to X$ is the inclusion of a point, by \cite[Proposition 2.21]{AW1}. we have
$$ i^*(K_\alpha^0(X)) = \ind(\alpha)K^0(\ast) = \ind(\alpha) \Z.$$
Hence the index of $\alpha$ is the index of the intersection
$\bigcap_{i=1}^\infty \Ker(d_i) \subseteq H^0(X; K^0) \cong \Z$,
where $d_i \colon H^0(X; K^0) \to H^i(X; K^{i-1})$ is the
$i$th differential in the
twisted Atiyah-Hirzebruch
spectra sequence computing $K^*_\alpha(X)$.

This perspective is behind the index formula
\cite[Theorem A]{AW2},
which we use in Section 2, and also the recent work of Gu \cite{Gu} on
the TPIP for 8-complexes.  Gu shows that the 3-primary
TPIP for 8-complexes involves controlling
$\beta^{\Z/3}(x^3)/\beta^{\Z/3}(x)H^4(X)$ for classes $x \in H^2(X; \Z/3)$,
just as the TPIP for $6$-complexes involves controlling
$\beta^{\Z/2}(x^2)/\beta^{\Z/2}(x)H^2(X)$ for classes $x \in H^2(X; \Z/2)$.
We expect that the methods of this paper involving the integrality of Wu
classes and the bi-linear algebra of Section \ref{ss:Martin}, will generalise
to combine with the work of Gu and prove the TPIC for odd order Brauer classes over
orientable $8$-manifolds.
\end{rem}

\begin{rem} \label{rem:bdry}
It is natural to wonder whether the singular spaces $Z$ underlying singular complex 3-dimensional projective varieties satisfy the TPIC.
In this direction, we note that the complement of the singular set in $Z$
can often be compactified to give a \spc manifold with boundary $(N, \del N)$.
The arguments of this paper can be generalised to prove
that if $(N, \del N)$ is a compact \spc manifold with
boundary where the first Chern class of $N$ vanishes on $\del N$ and
$TH_1(\del N) \otimes \Z/2 = 0$, then the TPIC holds for quotients $N/\del N$.
As a consequence we believe that the TPIC holds for singular spaces
underlying certain complex $3$-dimensional varieties with isolated conical singularities.
\end{rem}

{\bf Organisation:}
The rest of this paper is organised as follows.
In Section \ref{s:TPIC}
we prove Theorem \ref{thm:tpic6}
assuming Theorem \ref{thm:main}.
In Section \ref{s:BB} we establish
some preliminary results about linking pairings and bilinear forms.
In Section \ref{s:proof} we prove Theorem \ref{thm:main}
and in Section \ref{s:TE} we discuss Teichner's examples
and prove Theorem \ref{thm:T1}.

{\bf Acknowledgements:}
We would like to thank Ben Antieau and Ben Williams for helpful
comments and for pointing out mistakes in early stages of our work; Ben Martin for providing
a proof of Lemma \ref{lem:Martin};
Christian Haesemeyer for guidance with the Algebraic
Period-Index Conjecture;
and Jim Davis and Mark Powell for advice regarding the linking pairing.
Finally we thank the anonymous referee of an earlier version
of this paper for comments which have improved the exposition.

\section{The topological period index conjecture
for \spc $6$-manifolds } \label{s:TPIC}
In this section we prove
that the Topological Period-Index Conjecture holds for \spc 6-manifolds.
This is an elementary consequence of Theorem \ref{thm:main}
and results in \cite{AW2}.

Let $\alpha \in \Brt'(X) = TH^3(X)$ with $\ord(\alpha) = n$
and let

$$\beta^{\Z/n} \colon H^*(X; \Z/n) \to H^{*+1}(X) $$
be the mod~$n$ Bockstein, which lies in the exact sequence
$$ H^*(X; \Z/n) \xra{~\beta^{\Z/n}~}
H^{*+1}(X) \xra{~\times n~} H^{*+1}(X).
$$
As $\ord(\alpha) = n$,
we see that $\alpha = \beta^{\Z/n}(\xi)$ for some $\xi \in H^2(X; \Z/n)$.
We consider the Pontrjagin Square
$$ P_2 \colon H^2(X; \Z/2m) \to H^4(X; \Z/4m) $$
and following Antieau and Williams define $\wt Q(\xi) \in H^5(X)/ \alpha H^2(X)$
by the equation
$$ \wt Q(\xi) :=
\begin{cases}
\bigl[ \beta^{\Z/n}(\xi^2) \bigr]  & \text{$n$ is odd,} \\
\bigl[\beta^{\Z/2n}(P_2(\xi)) \bigr] & \text{$n$ is even,}
\end{cases}$$
where $[\gamma] \in H^5(X)/\alpha H^2(X)$ denotes
the coset of $\gamma  \in H^5(X)$.
By \cite[Theorem A]{AW2},
the element $\wt Q(\xi)$ depends only on $\alpha$
and when $X$ is a $6$-dimensional CW-complex,
\begin{equation*} \label{eq:AWe}
\ind(\alpha) = \ord(\wt Q(\xi)) \per(\alpha).
\end{equation*}
Hence to verify the topological period-index conjecture in dimension 6,
it suffices to show that $\ord(\wt Q(\xi)) | n$;
i.e.~$n \wt Q(\xi) = 0$.
For this we consider the following commutative diagram,
\[ \xymatrix{
H^2(X; \Z/2k) \ar[d]^{\rho_2} \ar[r]^(0.6){\beta^{\Z/2k}} &
H^3(X) \ar[d]^{\times k} \\
H^2(X; \Z/2) \ar[r]^(0.575){\beta^{\Z/2}} &
H^3(X),
 } \]
where $\rho_2$ denotes reduction modulo $2$ and the diagram commutes as a consequence of the following commutative diagram of coefficient short exact sequences:
\[ \xymatrix{
\Z \ar[d]_{\times k} \ar[r]^{\times 2k} &
\Z \ar[d]^{=} \ar[r]^(0.425){\rho_{2k}} &
\Z/2k \ar[d]^{\rho_2} \\
\Z \ar[r]^{\times 2} &
\Z \ar[r]^(0.45){\rho_2} &
\Z/2 }\]
Hence for all $\xi \in H^2(X; \Z/2k)$ we have the equation
\begin{equation} \label{eq:beta}
\beta^{\Z/2}(\rho_2(\xi)) = k\beta^{\Z/2k}(\xi).
\end{equation}

\begin{proof}[Proof of the topological period-index conjecture for
\spc $6$-manifolds]
Let $(N, c_1)$ be a \spc $6$-manifold,
$\alpha \in \Brt'(N)$ have order $n$
and choose $\xi \in H^2(N; \Z/n)$ such that $\alpha = \beta^{\Z/n}(\xi)$.
If $n$ is odd then $n \wt Q(\xi) =0 $ and so by
\cite[Theorem A (3)]{AW2} the topological period-index conjecture holds for $\alpha$.
If $n = 2m$ then set
$$ x := \rho_2(\xi) \in H^2(N; \Z/2).$$
By Theorem \ref{thm:main}, there is a $y \in H^2(N)$ such that
$\beta^{\Z/2}(x^2) = \beta^{\Z/2}(x) y$.
Applying \eqref{eq:beta} we obtain
\begin{equation} \label{eq:beta2}
\bigl[ \beta^{\Z/2}(x^2) \bigr] =
\bigl[ \beta^{\Z/2}(x) y \bigr] =
\bigl[ m \beta^{\Z/2m}(\xi) y \bigr] =
m \bigl[ \alpha y \bigr] = 0
\in H^5(N) / \alpha H^2(N).
\end{equation}
Applying \eqref{eq:beta} and \eqref{eq:beta2} we obtain
$$
2m \wt Q(\xi) = 
2m \bigl[ \beta^{\Z/4m}(P_2(\xi)) \bigr] 
=  \bigl[ 2m \beta^{\Z/4m}(P_2(\xi)) \bigr] 
=  \bigl[ \beta^{\Z/2} \bigl( \rho_2(P_2(\xi)) \bigr) \bigr] 
=  \bigl[ \beta^{\Z/2}(x^2) \bigr] 
=  0.
$$
%
%
\end{proof}
%

\newpage

\section{Linking pairings and bilinear forms} \label{s:BB}
In this section we establish some elementary results
used in the proof of Theorem \ref{thm:main}.

\subsection{Some properties of Bockstein homomorphisms}
For a space $X$ and a positive integer $n$ recall that
$$ \beta^{\Z/n} \colon H^*(X; \Z/n) \to H^{*+1}(X)$$
is the Bockstein associated to the coefficient sequence $\Z \longrightarrow \Z \xra{~\rho_n~} \Z/n$.

\begin{lem} \label{lem:Bprod}
Let $x \in H^*(X; \Z/n)$ and $y \in H^i(X)$, and consider $xy \in H^{*+i}(X; \Z/n)$.
Then
$$ \beta^{\Z/n}(x y) = \beta^{\Z/n}(x)y.$$
\end{lem}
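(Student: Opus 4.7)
The plan is to prove the identity at the cochain level using the Leibniz rule for the cup product. Recall that $\beta^{\Z/n}$ admits the following cochain-level description: given a class $x\in H^j(X;\Z/n)$, choose an integer cochain $\tilde x\in C^j(X;\Z)$ whose mod $n$ reduction is a cocycle representative of $x$. Then $\delta\tilde x = n\,\tilde x'$ for a unique integer cochain $\tilde x'\in C^{j+1}(X;\Z)$ (which is automatically a cocycle), and by definition $\beta^{\Z/n}(x)=[\tilde x']$.

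First I would choose an integer cocycle $\tilde y\in C^i(X;\Z)$ representing the given class $y$, which is possible because $y$ already has integer coefficients. The cochain-level cup product $\tilde x\cup\tilde y\in C^{j+i}(X;\Z)$ then reduces mod $n$, via the coefficient pairing $\Z/n\otimes\Z\to\Z/n$, to a cocycle representative of the product $xy\in H^{j+i}(X;\Z/n)$. Applying the graded Leibniz rule for the cochain cup product yields
\[
\delta(\tilde x\cup\tilde y) \;=\; \delta\tilde x\cup\tilde y + (-1)^{j}\,\tilde x\cup\delta\tilde y \;=\; n(\tilde x'\cup\tilde y),
\]
where the second term vanishes because $\tilde y$ is a cocycle. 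By the cochain-level description of the Bockstein recalled above, this gives $\beta^{\Z/n}(xy) = [\tilde x'\cup\tilde y] = \beta^{\Z/n}(x)\,y$, as desired.

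The argument is essentially formal and I do not expect any genuine obstacle: the usual worry about signs in a Leibniz identity is immediately defused by the fact that $\tilde y$ is a cocycle, so the sign term drops out and no sign appears in the conclusion. An alternative, more conceptual route would be to invoke naturality of the connecting homomorphism attached to the short exact sequence $0\to\Z\to\Z\to\Z/n\to 0$ with respect to the external cup product pairing with $H^*(X;\Z)$; but the cochain calculation above is the most direct justification and fits the scope of a preliminary lemma.
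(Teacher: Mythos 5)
Your proof is correct and takes essentially the same approach as the paper: both rest on the observation that cup product with the integral cocycle $y$ commutes with the coboundary, which in the paper is phrased as the map of short exact coefficient sequences being a map of chain complexes (so the result follows from naturality of connecting homomorphisms), while you unwind the same fact explicitly via the Leibniz rule at the cochain level. You even anticipate the paper's packaging in your closing remark about invoking naturality of the connecting homomorphism.
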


\begin{proof}
Let $y\in C^i(X)$ be a cocycle representative for $y$, $\rho_n$ denote reduction modulo $n$ and consider the commutative diagram below, in which the rows are short exact sequences of chain complexes:
\[
\xymatrix{
0 \ar[r] &
C^*(X) \ar[r]^{\times n} \ar[d]_{\cup y} &
C^*(X) \ar[r]^(0.425){\rho_n} \ar[d]_{\cup y} &
C^*(X;\Z/n) \ar[r] \ar[d]_{\cup \rho_n(y)}& 0\\
0 \ar[r] &
C^{*+i}(X) \ar[r]^{\times n} &
C^{*+i}(X) \ar[r]^(0.425){\rho_n} &
C^{*+i}(X;\Z/n) \ar[r] &
0}
\]
Observe that the vertical arrows are chain maps, since the coboundary is a derivation and $y$ is a cocycle. The result now follows from the naturality of connecting homomorphisms. (Compare Brown, \cite[V, 3.3]{Brown}.)
\end{proof}

We also consider the Bockstein homomorphism
$$ \beta^{\Q/\Z} \colon H^*(X; \Q/\Z) \to H^{*+1}(X), $$
which is associated to the coefficient sequence
$\Z \longrightarrow \Q \xra{~\pi~} \Q/\Z$.
Let
$$\iota_n \colon \Z/n \to \Q/\Z$$
be the inclusion defined by
sending $[1] \in \Z/n$ to $[\frac{1}{n}]$
and also write
$$ \iota_n \colon H^*(X; \Z/n) \to H^*(X; \Q/\Z) $$
for the map on homology induced by $\iota_n$.
The commutative diagram of coefficient sequences
\[\xymatrix{
\Z  \ar[d]^{=} \ar[r]^{\times n} &
\Z  \ar[d]^{\times \frac{1}{n}} \ar[r] &
\Z/n \ar[d]^{\iota_n} \\
\Z \ar[r] &
\Q \ar[r] &
\Q/\Z
}\]
gives rise to the equality
\begin{equation} \label{eq:B_and_iota}
\beta^{\Z/n} = \beta^{\Q/\Z} \circ \iota_n
\colon H^{*}(X; \Z/n) \to H^{*+1}(X).
\end{equation}

\subsection{The linking pairings of an oriented manifold} \label{ss:pairing}
Let $G$ and $H$ be a finite abelian groups.
Recall that a bilinear pairing
$$ \phi \colon G \times H \to \Q/\Z$$
is called {\em perfect}
if $g = 0 \in G$ if and only if $\phi(g, h) = 0$ for all $h \in H$
and $h = 0 \in H$ if and only if $\phi(g, h) = 0$ for all $g \in G$.

\begin{rem} \label{rem:perfect}
A useful property of perfect pairings, which we leave the reader to verify,
is that $h_1 = h_2 \in H$ if and only if $\phi(g, h_1) = \phi(g, h_2)$
for all $g \in G$.  An analogous statement holds for $g_1, g_2 \in G$.
\end{rem}

Now let $M$ be a closed, connected, oriented $m$-manifold with
$[M] \in H_m(M)$ the fundamental class of $M$.
%
%
For each $k = 2, \dots, m-2$, the {\em linking pairing} of $M$ is the pairing
$$ b_M \colon TH^{k+1}(M) \times TH^{m-k}(M) \to \Q/\Z,
\quad (f, y) \mapsto  b_M(f,y) := \an{\wt f  y, [M]},$$
where $\wt f \in H^{k}(M; \Q/\Z)$ is any class such that $\beta^{\Q/\Z}(\wt f) = f$.

\begin{lem} \label{lem:bN}
The linking pairing $b_M \colon TH^{k+1}(M) \times TH^{m-k}(M) \to \Q/\Z$ is a perfect pairing
such that for all $w \in H^{k}(M;\Z/n)$ and all $y \in H^{m-k}(M)$
$$ b_M(\beta^{\Z/n}(w),y) = \iota_n( \an{w y, [M]}).$$
\end{lem}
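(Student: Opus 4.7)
My plan is to prove the three claims in order: well-definedness of $b_M$, the Bockstein formula, and perfectness. The first two reduce to short naturality arguments, while perfectness is the main content and will require a careful identification via Poincar\'e duality and universal coefficients.

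For well-definedness, I would observe that any two lifts of $f \in TH^{k+1}(M)$ under $\beta^{\Q/\Z}$ differ by an element of $\ker(\beta^{\Q/\Z})$, which by the long exact sequence associated to $0 \to \Z \to \Q \to \Q/\Z \to 0$ coincides with the image of $\pi_* : H^k(M;\Q) \to H^k(M;\Q/\Z)$. Writing such a difference as $\pi_*(u)$ and applying naturality of cup product, $\pi_*(u) \cup y = \pi_*(u \cup y)$, where $u \cup y \in H^m(M;\Q)$. Since $y$ is torsion and $H^m(M;\Q)$ is torsion-free, bilinearity forces $u \cup y = 0$, so the pairing is independent of the lift. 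The Bockstein formula then follows immediately from \eqref{eq:B_and_iota}: taking $\iota_n(w) \in H^k(M;\Q/\Z)$ as an explicit lift of $\beta^{\Z/n}(w)$, naturality of the cup product and of the Kronecker pairing under the coefficient map $\iota_n : \Z/n \hookrightarrow \Q/\Z$ gives $\an{\iota_n(w) \cup y, [M]} = \iota_n \an{w \cup y, [M]}$, as asserted.

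For perfectness, I would unfold $b_M$ using the cap-cup duality identity $\an{\tilde f \cup y, [M]} = \an{\tilde f, y \cap [M]}$. Combining this with the universal coefficient identification $H^k(M;\Q/\Z) \cong \Hom(H_k(M), \Q/\Z)$ (valid since $\Q/\Z$ is injective) rewrites $b_M(f,y) = \tilde f(y \cap [M])$, where $\tilde f$ is now regarded as a homomorphism $H_k(M) \to \Q/\Z$. Thus the adjoint $TH^{k+1}(M) \to \Hom(TH^{m-k}(M), \Q/\Z)$ of $b_M$ factors as the assignment $f \mapsto \tilde f|_{TH_k(M)}$ followed by pre-composition with the Poincar\'e duality isomorphism $- \cap [M] : TH^{m-k}(M) \xrightarrow{\cong} TH_k(M)$.

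The main obstacle will be to show that $f \mapsto \tilde f|_{TH_k(M)}$ descends to an isomorphism $TH^{k+1}(M) \xrightarrow{\cong} \Hom(TH_k(M), \Q/\Z)$. To handle this, I would again use the long exact sequence to identify $\beta^{\Q/\Z}$ with a surjection $H^k(M;\Q/\Z) \twoheadrightarrow TH^{k+1}(M)$ whose kernel is $\im(\pi_*)$. Under the universal coefficient identification, a homomorphism $h : H_k(M) \to \Q/\Z$ lies in $\im(\pi_*)$ precisely when it lifts through $\Q \to \Q/\Z$; since $\Q$ is torsion-free this forces $h|_{TH_k(M)} = 0$, and since $H_k(M)/TH_k(M)$ is free abelian this condition is also sufficient. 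Hence restriction to the torsion subgroup induces the desired isomorphism. As $TH^{k+1}(M)$ and $TH^{m-k}(M)$ are finite abelian groups, one adjoint of $b_M$ being an isomorphism forces the other to be one as well, completing the proof that $b_M$ is perfect.
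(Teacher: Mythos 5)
Your proposal is correct and in substance follows the same route as the paper's proof: both unfold $b_M$ via the cap-cup identity $\an{\wt f y, [M]} = \an{\wt f, y \cap [M]}$, push the question through Poincar\'e duality to a pairing on $TH_k(M)$, and then use the UCT identification $H^k(M;\Q/\Z) \cong \Hom(H_k(M), \Q/\Z)$ (relying on $\Q/\Z$ being injective) together with the exact sequence $H^k(M;\Q)\to H^k(M;\Q/\Z)\xra{\beta^{\Q/\Z}} H^{k+1}(M)$. The paper establishes injectivity of one adjoint by splitting $H^k(M;\Q/\Z)$ as $TH_k(M)^\wedge \oplus \Hom(FH_k(M),\Q/\Z)$ and lifting the ``free'' component to $\Q$, then finishes with an order count; you instead identify $\ker\beta^{\Q/\Z}=\im(\pi_*)$ with precisely the homomorphisms $H_k(M)\to\Q/\Z$ vanishing on $TH_k(M)$ and observe this gives an injection of $TH^{k+1}(M)$ into $\Hom(TH_k(M),\Q/\Z)$. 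Two small points: you include a well-definedness check (lifts differing by $\pi_*(u)$ have $u\cup y = 0$ since $y$ is torsion and $H^m(M;\Q)$ is torsion-free) that the paper leaves implicit, which is a nice addition; and when you write ``Hence restriction to the torsion subgroup induces the desired isomorphism,'' you have only established injectivity --- surjectivity needs one more word, either that $\Hom(-,\Q/\Z)$ sends the injection $TH_k(M)\hookrightarrow H_k(M)$ to a surjection (again injectivity of $\Q/\Z$), or the paper's counting argument that the two finite groups have the same order. This is easily repaired and does not affect the soundness of the argument.
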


\begin{proof}
That $b_M$ is perfect is well known. The case $m = 2k{+}1$ is part of \cite[Exercise 55]{Da-K}.
The general case follows from results in \cite{S-T}.
Since we did not find a definitive reference in the literature, we give a proof below.

For a finite abelian group $G$, let $G^\wedge \! := \Hom(G, \Q/\Z)$ denote the {\em torsion dual} of $G$. A pairing $\phi:G\times H\to \Q/\Z$ of finite abelian groups induces adjoint homomorphisms
$\wh \phi: H\to G^\wedge, h \mapsto [g \mapsto \phi(g, h)]$ and
$\wh\phi': G\to H^\wedge, g \mapsto [h \mapsto \phi(g, h)]$, and it is easily checked that $\phi$ is perfect if and only if either one of $\wh\phi$ or $\wh\phi'$ is an isomorphism.

Standard properties of cup and cap products give $ \an{\wt f  y, [M]} = \an{\wt f, y \cap [M]}.$
Hence the adjoint homomorphism of $b_M$,
$$ \wh b_M \colon TH^{m-k}(M) \to TH^{k+1}(M)^\wedge,
\quad y \mapsto [f \mapsto b_M(f, y)=\an{\wt f, y \cap [M]}],$$
is equal to the composition 
$\wh \phi_M \circ PD$, where
$PD \colon TH^{m-k}(M) \to TH_{k}(M)$ is the Poincar\'e duality isomorphism
and $\wh \phi_M \colon TH_{k}(M) \to TH^{k+1}(M)^\wedge$ is an adjoint of the pairing
$$ \phi_M \colon TH^{k+1}(M) \times TH_{k}(M) \to \Q/\Z,
\quad (f,b) \mapsto \an{\wt f, b},$$
for $\wt f \in H^{k}(M; \Q/\Z)$ a lift of $f$.
Hence it suffices to prove that
%
%
$\wh \phi_M$ is an isomorphism; equivalently that the other adjoint
$\wh \phi_M' \colon TH^{k+1}(M) \to TH_{k}(M)^\wedge$ is an isomorphism.
Since the finite groups $TH^{k+1}(M)$ and $TH_{k}(M)^\wedge$
have the same order by the Universal Coefficient Theorem,
it suffices to show that $\wh \phi'_M$ is injective.

Suppose that $\wh \phi_M'(f) = 0$ and let $\wt f \in H^{k}(M; \Q/\Z)$
be a lift of $f$.  Then for all $b \in TH_{k}(M)$
$$ \an{\wt f, b} = 0 \in \Q/\Z.$$
Since $\Q/\Z$ is an injective $\Z$-module,
another application of the Universal Coefficient Theorem
gives
$$ H^{k}(M; \Q/\Z) \cong \Hom(H_{k}(M), \Q/\Z) \cong TH_{k}(M)^\wedge \oplus
\Hom(FH_{k}(M); \Q/\Z),$$
where $FH_{k}(M) : = H_{k}(M)/TH_{k}(M)$.  With respect to the above
decomposition we have $\wt f = (0, \bar z)$ for some $\bar z \in \Hom(FH_{k}(M); \Q/\Z)$.
Now $\bar z$ can be lifted to $z \in H^{k}(M; \Q)$ so that
$\wt f - \pi(z) = 0$ but then $f = \beta^{\Q/\Z}(\wt f) = \beta^{\Q/\Z}(\wt f - \pi(z)) = 0$
and so $\wh \phi_M'$ is injective.

The second statement follows directly from the definition of $b_M$ and \eqref{eq:B_and_iota}.
\end{proof}



\subsection{Bilinear forms over $\Z/2$} \label{ss:Martin}
In this subsection we establish a basic fact about symmetric bilinear forms
over $\Z/2$.
Let $V$ be a finitely generated $(\Z/2)$-vector space and let
$$ \lambda \colon V \times V \to \Z/2$$
be a symmetric bilinear form on $V$.  If $V^* := \mathrm{Hom}(V, \Z/2)$
is the dual vector space to $V$, then
the adjoint homomorphism of $\lambda$
is the homomorphism
$$ \wh \lambda \colon V \to V^*,
\quad v \mapsto \bigl(w \mapsto \lambda(v, w)  \bigr).$$
The form $(\lambda, V)$ is {\em nonsingular} if
$\wh \lambda \colon V \to V^*$ an isomorphism.  Notice that the map
$$ \gamma(\lambda) \colon V \to \Z/2,
\quad
v \mapsto \lambda(v, v)$$
is linear since
\begin{multline*}
\lambda(v+w, v+w) = \lambda(v, v) + \lambda(v, w) + \lambda(w, v) + \lambda(w, w) \\
= \lambda(v, v) + 2\lambda(v, w) + \lambda(w, w) = \lambda(v, v) + \lambda(w, w).
\end{multline*}
Thus $\gamma(\lambda) \in V^*$.

\begin{lem} \label{lem:Martin}
For all $\lambda$, $\gamma(\lambda) \in \im(\wh \lambda)$.
\end{lem}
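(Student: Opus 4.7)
The plan is to reduce to the nonsingular case by quotienting out the radical $K := \ker(\wh\lambda) \subseteq V$. The form $\lambda$ descends to a well-defined symmetric bilinear form $\bar\lambda$ on $V/K$, which is nonsingular by the very definition of the radical. The first step in the argument would be to observe that $\gamma(\lambda)$ vanishes on $K$: if $k \in K$ then $\lambda(k,v) = 0$ for every $v \in V$, and in particular $\gamma(\lambda)(k) = \lambda(k,k) = 0$. Consequently $\gamma(\lambda)$ factors through $V/K$, and since $\gamma(\bar\lambda)(\bar v) = \bar\lambda(\bar v,\bar v) = \lambda(v,v) = \gamma(\lambda)(v)$, the induced functional on $V/K$ is exactly $\gamma(\bar\lambda)$. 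Writing $\pi \colon V \to V/K$ for the projection and $\pi^*\colon (V/K)^* \to V^*$ for its dual, this reads $\gamma(\lambda) = \pi^*\gamma(\bar\lambda)$.

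The second step would invoke the nonsingularity of $\bar\lambda$: the adjoint $\wh{\bar\lambda}\colon V/K \to (V/K)^*$ is an isomorphism, so there exists $\bar v \in V/K$ with $\wh{\bar\lambda}(\bar v) = \gamma(\bar\lambda)$. Choosing any lift $v \in V$ of $\bar v$, one verifies for every $w \in V$ that $\wh\lambda(v)(w) = \lambda(v,w) = \bar\lambda(\bar v,\bar w) = \wh{\bar\lambda}(\bar v)(\bar w) = \gamma(\bar\lambda)(\bar w) = \gamma(\lambda)(w)$. Hence $\gamma(\lambda) = \wh\lambda(v) \in \im(\wh\lambda)$, as required.

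There is no serious obstacle. The whole content of the lemma is the observation that the characteristic element is automatically trivial on the radical, which uses the linearity of $\gamma(\lambda)$ established in the computation immediately preceding the lemma (a characteristic-$2$ phenomenon, since otherwise $\gamma(\lambda)$ fails to be additive). Once this is noted, the nonsingular quotient argument produces the preimage for free. A basis-level alternative would be to decompose $(V,\lambda)$ as an orthogonal sum of hyperbolic planes, rank-one diagonal forms, and a zero form on the radical, and then write $v$ down explicitly summand by summand; but the abstract reduction above seems more transparent.
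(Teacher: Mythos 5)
Your argument is correct and rests on the same idea as the paper's proof: $\gamma(\lambda)$ is automatically trivial on the radical $\ker(\wh\lambda)$, and nonsingularity on the quotient produces a preimage under $\wh\lambda$. The paper packages this as the orthogonal decomposition of $(V,\lambda)$ into a zero form plus a nonsingular form, whereas you pass directly to the quotient $V/\ker(\wh\lambda)$; these are interchangeable formulations of the same reduction.
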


\begin{proof}
For the orthogonal sum of bilinear forms,
$\lambda_0 \oplus \lambda_1$ we have
$$ \gamma(\lambda_0 \oplus \lambda_1) = \gamma(\lambda_0) \oplus \gamma(\lambda_1).$$
The lemma follows since every symmetric bilinear form over a finite field
is isomorphic to the orthogonal sum of the zero form and a nonsingular form.
%
\end{proof}

\begin{rem} \label{rem:diagonal}
Although we will not use this fact, it is worthwhile noting that Lemma \ref{lem:Martin}
is equivalent to the following statement: Let $A$ be a symmetric matrix over $\Z/2$,
then the diagonal of $A$ lies in the column space of $A$.
\end{rem}

\begin{ex} \label{ex:lambda-x}
Let $N$ be a closed, connected, oriented $6$-manifold and $x \in H^2(N; \Z/2)$.
We identify $H^6(N; \Z/2) = \Z/2$ and
for the $(\Z/2)$-vector space
$$ V := TH^2(N)/2 TH^2(N)$$
we define the symmetric bilinear form
$$ \lambda_{x} \colon V \times V \to \Z/2,
\quad ([y], [z]) \mapsto yxz.
$$
By Lemma \ref{lem:Martin}, there is a vector $[d] \in V$ such that
$\wh \lambda_x([d]) = \gamma(\lambda_{x}) \in V^*$.
Hence for any $d_x \in [d] \subset TH^2(N)$ and all $y \in TH^2(N)$,
we have
$$ y^2x = yxy = \lambda_x([y], [y])  = \lambda_x([y], [d_x]) = yxd_x.$$
\end{ex}

%

\section{The proof of \thmref{thm:main}}\label{s:proof}
Let $N$ be a closed, connected, oriented \spc $6$-manifold.
To prove
Theorem \ref{thm:main} it suffices to prove the following:
for $x \in H^2(N; \Z/2)$ and all $y \in TH^2(N)$,
there is a class $e_x \in H^2(N)$ such that
\begin{equation} \label{eq:key}
x^2y = xe_xy \in H^6(N; \Z/2).
\end{equation}
To see this we use the linking pairing of $N$, which is a perfect pairing by Lemma \ref{lem:bN}:
$$ b_N \colon TH^5(N) \times TH^2(N) \to \Q/\Z$$
From \eqref{eq:key} and Lemmas \ref{lem:bN} and \ref{lem:Bprod}, for all $y \in TH^2(N)$ we have
$$ b_N(\beta^{\Z/2}(x^2),y) = \iota_2(\an{x^2y, [N]}) = \iota_2(\an{xe_xy, [N]}) = b_N(\beta^{\Z/2}(xe_x),y)
= b_N(\beta^{\Z/2}(x)e_x,y).$$
Thus $\beta^{\Z/2}(x^2) = \beta^{\Z/2}(x)e_x$, since $b_N$ is perfect;
see Remark \ref{rem:perfect}.


To find $e_x$, we start with $v_2(N)$, the second Wu class of $N$.
Since $N$ is orientable, $v_2(N)$
coincides with $w_2(N)$, the second Stiefel-Whitney class of $N$.
Since $N$ is \spc the class $w_2(N)$ lifts to an integral class $c_1 \in H^2(N)$.
In summary, we have
\begin{equation} \label{eq:c1}
v_2(N) = w_2(N) = \rho_2(c_1) \in H^2(N; \Z/2).
\end{equation}
%
By definition of the Wu class $v_2(N)$ we have
\begin{equation} \label{eq:Cartan}
xy v_2(N) = Sq^2(xy) = x^2y + xy^2,
\end{equation}
where we have used the Cartan formula for $Sq^2(x y)$ and the
fact that $Sq^1(\rho_2(y)) = 0$.
By Equation \eqref{eq:c1} we can replace $v_2(N)$ by $c_1$ in
\eqref{eq:Cartan} and rearranging we obtain
\begin{equation} \label{eq:Wu}
x^2y = xyc_1 + xy^2.
\end{equation}
By Example \ref{ex:lambda-x}, there is an element $d_x \in TH^2(N)$ such that
$xy^2 = xyd_x$ and so
$$x^2y = xyc_1 + xyd_x = xye_x,$$
where $e_x : = c_1 + d_x$.
Hence we have found $e_x$ as in \eqref{eq:key},
finishing the proof of Theorem \ref{thm:main}.

%

\section{Teichner's examples} \label{s:TE}
In this section we recall a construction due to Teichner \cite{Teichner}, which produces closed smooth $6$-manifolds $N$ with classes $x\in H^2(N;\Z/2)$ such that
$\beta^{\Z/2}(x^2)\neq 0$.
The manifolds $N$ are constructed as total spaces of sphere bundles of rank $3$ vector bundles $E$ over closed $4$-manifolds.
In the following, $\Z^{w_1(E)}$ denotes integral coefficients
twisted by the first Stiefel-Whitney class of the bundle $E$.

\newpage

\begin{lem}[{\cite[Lemma 1]{Teichner}}] \label{lem:T1}
Let $E$ be a $3$-dimensional bundle over a path-connected space $X$, with sphere bundle $N=SE$.
\begin{enumerate}
\item[(i)] There exists a class $x\in H^2(N;\Z/2)$ which restricts to the generator in the cohomology $H^2(S^2;\Z/2)$ of the fibre if and only if $w_3(E)=0$.
\item[(ii)] Assume that $w_2(E)$ is not the reduction of a class in $H^2(X;\Z^{w_1(E)})$. Then any class $x$ as in (i) has $0\neq \beta^{\Z/2}(x^2)\in H^5(N;\Z)$.
\end{enumerate}
\end{lem}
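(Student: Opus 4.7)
The plan is to analyse the sphere bundle $\pi\colon N = SE \to X$ via its Gysin sequences. Write $\pi_!$ for the Gysin (integration over the fibre) homomorphism, which has the form $\pi_!\colon H^k(N;\Z/2) \to H^{k-2}(X;\Z/2)$ with mod~$2$ coefficients and $\pi_!\colon H^k(N) \to H^{k-2}(X;\Z^{w_1(E)})$ with integer coefficients (the twist reflects the fact that $E$ is fibrewise orientable precisely when $w_1(E) = 0$), and these are compatible with reduction mod~$2$. Part (i) then follows directly from the mod~$2$ Gysin sequence
\[
\cdots \to H^{k-3}(X;\Z/2) \xrightarrow{\,\cup w_3(E)\,} H^k(X;\Z/2) \xrightarrow{\pi^*} H^k(N;\Z/2) \xrightarrow{\pi_!} H^{k-2}(X;\Z/2) \to \cdots,
\]
because a class $x \in H^2(N;\Z/2)$ restricts to the generator of $H^2(S^2;\Z/2)$ on fibres if and only if $\pi_!(x) = 1 \in H^0(X;\Z/2)$, and such a preimage exists if and only if $w_3(E) \cup 1 = w_3(E)$ vanishes.

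For part (ii) I would argue by contradiction. Suppose $\beta^{\Z/2}(x^2) = 0$, so that $x^2 = \rho_2(\tilde z)$ for some $\tilde z \in H^4(N;\Z)$. Applying the integer Gysin map produces $\pi_!(\tilde z) \in H^2(X;\Z^{w_1(E)})$, and the compatibility with mod~$2$ reduction yields $\rho_2(\pi_!(\tilde z)) = \pi_!(\rho_2(\tilde z)) = \pi_!(x^2) \in H^2(X;\Z/2)$. Thus the lemma reduces to the key identity
\[
\pi_!(x^2) = w_2(E) \in H^2(X;\Z/2),
\]
which would exhibit $w_2(E)$ as the mod~$2$ reduction of a class in $H^2(X;\Z^{w_1(E)})$, contradicting the hypothesis.

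To establish this key identity I would pass through the Thom space $X^E$. The cofibre sequence $SE \to DE \simeq X \to X^E$ provides a connecting homomorphism $\delta\colon H^*(N;\Z/2) \to H^{*+1}(X^E;\Z/2)$ which, combined with the Thom isomorphism $H^{k+3}(X^E;\Z/2) \cong H^k(X;\Z/2)$, realises $\pi_!$. Under this identification the equation $\pi_!(x) = 1$ becomes $\delta(x) = U$, where $U \in H^3(X^E;\Z/2)$ is the mod~$2$ Thom class. Using stability of Steenrod squares together with $Sq^2(x) = x^2$ and the Wu formula $Sq^2(U) = w_2(E)\cup U$ for the Thom class,
\[
\delta(x^2) = \delta(Sq^2 x) = Sq^2(\delta x) = Sq^2(U) = w_2(E) \cup U,
\]
so translating back via the Thom isomorphism gives $\pi_!(x^2) = w_2(E)$. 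The main technical point to verify carefully is the compatibility of the integer and mod~$2$ Gysin maps across the twisted coefficient reduction $\Z^{w_1(E)} \to \Z/2$, but this is forced by naturality of the Thom isomorphism under coefficient maps.
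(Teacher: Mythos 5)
Your proof is correct. Note that the paper does not reprove this lemma but cites it directly as \cite[Lemma 1]{Teichner}; your argument is essentially Teichner's original one. Part (i) is the standard Gysin-sequence reading of the condition $\pi_!(x)=1$, and for part (ii) your key computation $\delta(x^2)=\delta(Sq^2x)=Sq^2(\delta x)=Sq^2(U)=w_2(E)\cup U$ in the Thom space, followed by comparing the integral (twisted) and mod~$2$ Gysin maps, is exactly the mechanism; indeed the paper's own proof of Proposition~\ref{prop:T} reproduces this identical chain of equalities with $t_E$ in place of $U$ and invokes the same compatibility of the twisted Thom isomorphism with reduction mod~$2$.
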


The next lemma guarantees the existence of such bundles with base $X=M$ a closed connected $4$-manifold.

\begin{lem}[{\cite[Lemma 2]{Teichner}}] \label{lem:T2}
Let $M$ be a closed connected $4$-manifold with fundamental group $\Z/4$. Then there exists a $3$-dimensional bundle $E$ over $M$ with $w_3(E)=0$, $w_1(E)= w_1(M)$ and $w_2(E)$ not the reduction of a class in $H^2(M;\Z^{w_1(E)})$. \qed
\end{lem}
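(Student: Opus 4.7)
The plan is to construct $E$ in two stages: first to exhibit a class $x \in H^2(M;\Z/2)$ with the property required of $w_2(E)$ together with the Wu-type constraint that makes $w_3(E)=0$ achievable, then to realize the triple $(w_1(M), x, 0)$ of Stiefel--Whitney data by an actual rank-$3$ real vector bundle via obstruction theory on $BO(3)$.

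To find $x$, I would use $\pi_1(M) = \Z/4$ to deduce $H_1(M;\Z) = \Z/4$, so that twisted Poincar\'e duality gives $H^3(M;\Z^{w_1(M)}) \cong H_1(M;\Z) = \Z/4$, with $2$-torsion subgroup $\Z/2 \neq 0$. The long exact sequence associated to $0 \to \Z^{w_1(M)} \xrightarrow{\times 2} \Z^{w_1(M)} \to \Z/2 \to 0$ shows that the twisted Bockstein
\[
\beta^{w_1(M)} \colon H^2(M;\Z/2) \to H^3(M;\Z^{w_1(M)})
\]
surjects onto this $2$-torsion, so one can pick $x \in H^2(M;\Z/2)$ with $\beta^{w_1(M)}(x) \neq 0$; equivalently, $x$ is not the mod-$2$ reduction of any class in $H^2(M;\Z^{w_1(M)})$, as required of $w_2(E)$. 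For the Wu condition $Sq^1(x) + w_1(M)\,x = 0 \in H^3(M;\Z/2)$---necessary for a rank-$3$ bundle with prescribed $(w_1,w_2)$ to admit $w_3=0$, by the Wu formula $w_3 = Sq^1 w_2 + w_1 w_2$---I would invoke the standard identity $\rho_2 \circ \beta^{w_1(M)} = Sq^1 + w_1(M)\cup(-)$ of mod-$2$ operations, obtained by comparing the two Bockstein sequences with and without the orientation twist. Since $\beta^{w_1(M)}(x)$ is the nonzero $2$-torsion element of $\Z/4$, it lies in the image of multiplication by $2$, hence in $\ker(\rho_2)$ by exactness, giving $Sq^1(x) + w_1(M)\,x = 0$ automatically.

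Finally I would realize $(w_1(M), x, 0)$ by a rank-$3$ real bundle via obstruction theory on the Postnikov tower of $BO(3)$. The first Postnikov stage is $K(\Z/2,1)$, classifying $w_1$; the second is an extension by $K(\Z/2,2)$ whose $k$-invariant into $K(\Z/2,3)$ is essentially $Sq^1 + w_1\cup(-)$, encoding the Wu relation. The previous paragraph shows that our data makes this $k$-invariant vanish on $M$, so the classifying map $(w_1(M), x) \colon M \to K(\Z/2,1) \times K(\Z/2,2)$ lifts to the second Postnikov stage of $BO(3)$. Since $\pi_2(O(3)) = 0$ the second and third Postnikov stages coincide, and the next possible obstruction, coming from $\pi_3(O(3)) = \Z$, lies in $H^5(M;\Z)$, which vanishes because $\dim M = 4$. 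Hence the lift extends to $BO(3)$ and yields the desired bundle $E$. I expect the main obstacle to be this last realization step---in particular the precise identification of the relevant $k$-invariant of $BO(3)$ as $Sq^1 + w_1\cup(-)$---although this follows from the standard Postnikov analysis of $BO(3)$ together with the Wu formula.
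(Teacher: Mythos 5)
Your argument is correct and follows the same route as Teichner's proof, which the paper cites for this lemma and adapts further in Lemma~\ref{lem:TeichnerExtension}: select $x$ with nonzero twisted Bockstein, use the identity $\rho_2\circ\beta^{w_1(M)} = Sq^1 + w_1(M)\cup(-)$ to verify the Wu constraint, and realise the pair $(w_1(M),x)$ by a rank-$3$ bundle via obstruction theory over the $4$-complex $M$.

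One imprecision worth flagging: the Postnikov $k$-invariant of $BO(3)$ linking $\pi_1 = \Z/2$ to $\pi_2 = \Z/2$ is in fact zero, so that the $2$-type of $BO(3)$ is the product $K(\Z/2,1)\times K(\Z/2,2)$; one sees this by pulling back along the classifying map of the sum of a line bundle with a trivial rank-$2$ bundle over $\R P^\infty$, which realises $w_1$ with $w_1^3\neq 0$. Consequently the realisation of \emph{any} pair $(w_1,w_2)$ by a rank-$3$ bundle over a $4$-complex is unobstructed for dimension reasons alone, with no condition on $x$. The operation $Sq^1 w_2 + w_1 w_2$ is not a $k$-invariant of $BO(3)$ but the universal Wu formula expressing $w_3$ in terms of $(w_1,w_2)$; the constraint $Sq^1 x + w_1(M)x = 0$ that you correctly verify is therefore what forces $w_3(E)=0$ for the resulting bundle, not a precondition for the bundle's existence. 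Your conclusion is unaffected.
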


\begin{defn} \label{defn:TP}
The total space $N$ of the sphere bundle of a bundle $E$ satisfying the conditions of Lemma \ref{lem:T2} is a closed connected $6$-manifold, which by Lemma \ref{lem:T1} supports a class $x\in H^2(N;\Z/2)$ satisfying $\bz(x^2)\neq 0$. We will call such a total space $N$ a {\em Teichner manifold}
and the pair $(N, x)$ a {\em Teichner pair}.
\end{defn}


\subsection{\spc $6$-manifolds $N$ with $\bz(x^2) \neq 0 \in H^5(N)$}
In this subsection we show that a Teichner manifold over an orientable base is \spc.

\begin{lem} \label{lem:spinc}
%
Let $N$ be a Teichner manifold over a closed connected $4$-manifold $M$. Then:
\begin{enumerate}
\item[(i)] $N$ is orientable;
\item[(ii)] if $M$ is orientable, then $N$ is \spc.
\end{enumerate}
\end{lem}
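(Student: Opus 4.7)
The plan is to first set up the standard decomposition of $TN$ that arises from $N = SE$ being the sphere bundle of $E$, and then to read off the first two Stiefel--Whitney classes to obtain (i) and (ii).

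Let $p : N \to M$ denote the projection, and consider the tautological real line bundle $L \subset p^*E$ whose fiber over $v \in N$ is the line $\R v \subset E_{p(v)}$. The assignment $v \mapsto v$ is a nowhere-zero section of $L$, so $L$ is trivial. Choosing a fibrewise metric on $E$ yields an orthogonal splitting $p^*E \cong L \oplus T_f$, where $T_f$ is the vertical tangent bundle of $p$. Using the standard isomorphism $T(DE) \cong \pi^*(TM \oplus E)$ on the disk bundle and noting that the outward normal bundle of $N = \partial DE$ is trivialized by the radial direction (which precisely realises $L$), one obtains $TN \cong p^*TM \oplus T_f$.

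For (i), the splitting $p^*E \cong L \oplus T_f$ gives $w_1(T_f) = p^*w_1(E)$, and therefore $w_1(N) = p^*(w_1(M) + w_1(E))$. Since $w_1(E) = w_1(M)$, this equals $2\, p^*w_1(M) = 0 \in H^1(N;\Z/2)$, and $N$ is orientable. For (ii), assume additionally that $M$ is orientable, so $w_1(E) = w_1(M) = 0$ and $E$ is oriented. Then $w_1(T_f) = 0$, so $T_f$ is an oriented rank-$2$ real vector bundle and hence admits a complex structure, producing an integral class $c_1(T_f) \in H^2(N;\Z)$ with $\rho_2 c_1(T_f) = w_2(T_f)$. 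The splitting $p^*E \cong L \oplus T_f$ with $L$ trivial yields $w_2(T_f) = p^*w_2(E)$. Moreover every closed orientable $4$-manifold is \spc (a classical result of Hirzebruch--Hopf), so $w_2(M)$ lifts to some $c_1(M) \in H^2(M;\Z)$. Combining, $w_2(N) = p^*w_2(M) + w_2(T_f)$ admits the integral lift $p^*c_1(M) + c_1(T_f)$, and $N$ is \spc.

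The whole proof is a direct Stiefel--Whitney calculation once the decomposition of $TN$ is in place; there is no serious technical obstacle. The one conceptual point worth emphasising is that, although $w_2(E)$ is non-liftable over $M$ (this being the defining feature of Teichner's construction), its pullback $p^*w_2(E)$ does lift over $N$. The mechanism is the tautological trivialization of $L$, which identifies $p^*w_2(E)$ with $w_2(T_f)$; the latter lifts because $T_f$ is oriented of rank $2$ and hence complex.
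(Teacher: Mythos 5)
Your proof is correct, and it begins exactly as the paper's does: both use the trivial normal bundle of $N = SE \subset DE$ to obtain the stable isomorphism $TN \oplus \R \cong p^*(TM \oplus E)$, and both deduce the same formulas $w_1(N) = p^*(w_1(M)+w_1(E)) = 0$ and (when $M$ is orientable) $w_2(N) = p^*w_2(M) + p^*w_2(E)$. Where you diverge is in the final step of (ii). The paper shows that $\beta^{\Z/2}(w_2(N)) = 0$ by splitting it as $\pi^*\beta^{\Z/2}(w_2(M)) + \pi^*\beta^{\Z/2}(w_2(E))$: the first term vanishes because orientable $4$-manifolds are \spc, and the second because $\beta^{\Z/2}(w_2(E)) = W_3(E)$ coincides with the Euler class of the oriented rank-$3$ bundle $E$, whose pullback to the sphere bundle is zero. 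You instead produce an \emph{explicit} integral lift: you observe that the tautological section of $p^*E$ splits off a trivial line $L$, so the vertical tangent bundle $T_f$ is an oriented rank-$2$ (hence complex) bundle with $w_2(T_f) = p^*w_2(E)$, and $p^*c_1(M) + c_1(T_f)$ is then an integral lift of $w_2(N)$. The two arguments rest on the same mechanism (the tautological section of $p^*E$ over $SE$), but yours makes the lift concrete rather than arguing through the vanishing of the integral Stiefel--Whitney class. Both are complete and correct; yours is a touch more constructive and avoids invoking $W_3 = e$ for $SO(3)$-bundles.
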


\begin{proof}
Let $\pi:N\to M$ be the bundle projection. Since the normal bundle of the sphere bundle in the total space of $E$ is trivial, there are bundle isomorphisms
$$
TN\oplus \R \cong TE|_N \cong \pi^*(TM)\oplus\pi^*(E).
$$
Now part (i) follows from the equation
\[
w_1(N) = \pi^*w_1(M) + \pi^*w_1(E) = 0.
\]
For (ii), assume $w_1(M)=0$ so that
\[
w_2(N) = \pi^*w_2(M) + \pi^*w_1(M)\pi^*w_1(E) + \pi^*w_2(E) = \pi^*w_2(M)+\pi^*w_2(E).
\]
Then
\[
\bz(w_2(N)) = \pi^*(\bz(w_2(M)) + \pi^*(\bz(w_2(E))).
\]
The first term vanishes since any orientable $4$-manifold is \spc; see \cite{Morgan} for example. The second term vanishes since $\bz(w_2(E))\in H^3(M)$ is the Euler class of the orientable bundle $E$.
\end{proof}

The following proposition proves Theorem \ref{thm:T1}(i).

\begin{prop} \label{prop:tbeta}
Let $(N, x)$ be a Teichner pair over a closed, connected, orientable $4$-manifold.
Then $N$ is \spc and $\bz(x^2) \neq 0$, but $\bz(x^2) \in \bz(x)H^2(N)$.

Furthermore, the element $\alpha = \bz(x)\in TH^3(N)$ has $\per(\alpha)=2$
and $\ind(\alpha)=4$.
\end{prop}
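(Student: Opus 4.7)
The first three assertions chain together from results already established. Since the base $4$-manifold is orientable, Lemma \ref{lem:spinc}(ii) gives that the Teichner manifold $N$ is \spc. The non-vanishing $\bz(x^2) \neq 0$ is built into the Teichner pair hypothesis via Lemma \ref{lem:T1}(ii) and Definition \ref{defn:TP}. Applying Theorem \ref{thm:main} to the \spc $6$-manifold $N$ then yields a class $e_x \in H^2(N)$ with $\bz(x^2) = \bz(x) e_x$, so $\bz(x^2) \in \bz(x) H^2(N)$.

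For the period, since $\alpha = \bz(x)$ with $x \in H^2(N; \Z/2)$, we have $2\alpha = 0$ and $\per(\alpha) \in \{1,2\}$. If $\alpha = 0$, the Bockstein exact sequence shows that $x$ admits an integral lift $\wt x \in H^2(N)$, whence $x^2 = \rho_2(\wt x^2)$ is a mod $2$ reduction, forcing $\bz(x^2) = 0$. This contradicts $\bz(x^2) \neq 0$, so $\per(\alpha) = 2$.

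For the index, Theorem \ref{thm:tpic6} gives $\ind(\alpha) \mid \per(\alpha)^{3-1} = 4$, while $\per(\alpha) = 2$ divides $\ind(\alpha)$ and these share the same prime divisors, so $\ind(\alpha) \in \{2,4\}$. To decide which, I would invoke the Antieau--Williams formula $\ind(\alpha) = \ord(\wt Q(\xi)) \per(\alpha)$ recalled in Section \ref{s:TPIC}, taking $\xi = x$, so that $\wt Q(x) = [\beta^{\Z/4}(P_2(x))] \in H^5(N)/\alpha H^2(N)$. Combining the identity $\rho_2 P_2(x) = x^2$ with equation \eqref{eq:beta} (applied with $k=2$ to $P_2(x) \in H^4(N; \Z/4)$) gives $2\beta^{\Z/4}(P_2(x)) = \bz(x^2)$. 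Hence $2\wt Q(x) = [\bz(x^2)] = 0$ by the membership already proved. Conversely, if $\wt Q(x) = 0$ in the coset group then $\beta^{\Z/4}(P_2(x)) = \alpha h$ for some $h \in H^2(N)$, and doubling yields $\bz(x^2) = 2\alpha h = 0$ (since $2\alpha = 0$), contradicting Teichner's non-vanishing. Therefore $\ord(\wt Q(x)) = 2$ and $\ind(\alpha) = 4$.

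The argument is essentially a chain of invocations; the only delicate point is the doubling computation that identifies $2\wt Q(x)$ with the class $[\bz(x^2)]$ and thereby ties the Antieau--Williams obstruction directly to the Teichner non-vanishing. Once Theorem \ref{thm:main} is in hand, the rest is a bookkeeping exercise with Bocksteins, the Pontrjagin square, and the coset $H^5(N)/\alpha H^2(N)$.
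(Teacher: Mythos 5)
Your proposal is correct and follows essentially the same route as the paper: deduce the first three assertions from Lemma \ref{lem:spinc}, Lemma \ref{lem:T1} with Definition \ref{defn:TP}, and Theorem \ref{thm:main}; then apply the Antieau--Williams formula $\ind(\alpha)=\ord(\wt Q(x))\per(\alpha)$, using the identities $\rho_2(P_2(x))=x^2$ and equation \eqref{eq:beta} to get $2\beta^{\Z/4}(P_2(x))=\beta^{\Z/2}(x^2)$, and concluding $\ord(\wt Q(x))=2$ from $\beta^{\Z/2}(x^2)\in\alpha H^2(N)$ (by Theorem \ref{thm:main}) together with $\beta^{\Z/2}(x^2)\neq 0$ and the fact that $\alpha H^2(N)$ is $2$-torsion. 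The only differences are cosmetic: you spell out the (easy, and in the paper implicit) argument that $\per(\alpha)=2$, and you make a harmless detour through Theorem \ref{thm:tpic6} to bound $\ind(\alpha)$ before determining it exactly, a step that is redundant once the Antieau--Williams formula is invoked.
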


\begin{proof}
The first statement is a consequence of Lemma \ref{lem:spinc}, Lemma \ref{lem:T1}
and Theorem \ref{thm:main}.

To prove the second statement,
we recall that by \cite[Theorem A]{AW2},
$$\ind(\alpha) = \ord(\wt Q(x)) \per(\alpha),$$
where $\wt Q(x)=[\beta^{\mathbb{Z}/4}(P_2(x))]\in H^5(N)/\alpha H^2(N)$.
Note that by Theorem \ref{thm:main} and \eqref{eq:beta},
\[
2\wt Q(x) = 2[\beta^{\mathbb{Z}/4}(P_2(x))] = [2\beta^{\mathbb{Z}/4}(P_2(x))] = [\bz(x^2)]=0,
\]
since $N$ is \spc.
However $\wt Q(x)\neq 0$, since any element of $\alpha H^2(N)$ is $2$-torsion, while
\[
2\beta^{\Z/4}(P_2(x)) = \bz(x^2)\neq 0.
\]
Hence $\ord(\wt Q(x))=2$ and we're done.
\end{proof}
%

\subsection{$6$-manifolds violating the TPIC}
In this subsection we give examples of Teichner pairs $(N, x)$
over a non-orientable base which violate the topological period-index conjecture;
i.e.~$\bz(x^2) \notin \bz(x)H^2(N)$.
%
%
We first prove
an extension
of \cite[Lemma 2]{Teichner}.

\begin{lem}\label{lem:TeichnerExtension}
Let $M$ be a closed connected $4$-manifold with an element $a\in H_1(M)$ of order $4$. Then there exists a $3$-dimensional bundle $E$ over $M$ with $w_1(E)= w_1(M)$, $w_2(E)$ not coming from $H^2(M;\Z^{w_1(E)})$ and $w_3(E)=0$.
\end{lem}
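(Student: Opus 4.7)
The plan is to produce the bundle $E$ in two stages: first construct a class $u \in H^2(M; \Z/2)$ with the required non-liftability and Wu-compatibility properties; then realize $(w_1(M), u)$ as the first two Stiefel-Whitney classes of a rank $3$ bundle, so that $w_3$ vanishes for free.

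For the first stage, set $w := w_1(M)$. Twisted Poincar\'e duality gives $H^3(M; \Z^w) \cong H_1(M; \Z)$ (the double twist cancels since $\Z^w \otimes \Z^w \cong \Z$), so there is a class $\wh a \in H^3(M; \Z^w)$ of order $4$ corresponding to $a$. Then $2\wh a$ is nonzero and $2$-torsion. The Bockstein long exact sequence associated to $0 \to \Z^w \xrightarrow{\times 2} \Z^w \to \Z/2 \to 0$ shows that $\beta^w \colon H^2(M; \Z/2) \to H^3(M; \Z^w)$ surjects onto the $2$-torsion subgroup, so I choose $u$ with $\beta^w(u) = 2\wh a$. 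By exactness $u \notin \im(\rho_2 \colon H^2(M; \Z^w) \to H^2(M; \Z/2))$, which gives the desired non-liftability. Using the standard identity $\rho_2 \beta^w = Sq^1 + w\cup(-)$ together with $\rho_2(2\wh a) = 0$ yields $Sq^1 u + w u = 0 \in H^3(M; \Z/2)$.

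For the second stage, I will exhibit a rank $3$ bundle $E$ with $w_1(E) = w$ and $w_2(E) = u$ via the Postnikov tower of $BO(3)$. Since $\pi_1 BO(3) = \pi_2 BO(3) = \Z/2$, $\pi_3 BO(3) = 0$, and $w_1, w_2$ are independent generators of $H^*(BO(3); \Z/2)$, the $k$-invariant between the first two Postnikov stages vanishes, so $P_2 BO(3) \simeq K(\Z/2, 1) \times K(\Z/2, 2)$. Thus $(w, u)$ determines a map $M \to P_2 BO(3)$. The higher Postnikov obstructions to lifting this to $M \to BO(3)$ live in $H^{n+1}(M; \pi_n BO(3))$ for $n \geq 3$ and all vanish: the $n=3$ term because $\pi_3 BO(3) = 0$, and the remaining terms because $\dim M = 4$. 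The resulting bundle $E$ then satisfies $w_3(E) = Sq^1 w_2(E) + w_1(E) w_2(E) = Sq^1 u + w u = 0$ by Wu's formula.

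The main obstacle is the realizability claim in the second stage. A naive approach would seek a decomposition $E = L \oplus \gamma$ as a sum of a line bundle and a rank $2$ bundle, but any such decomposition forces $w_2(E)$ to be the mod $2$ reduction of an (ordinary or twisted) integral class --- exactly what $u$ is not. So the bundle must be constructed as a genuinely rank $3$ object, and the Postnikov-theoretic argument is the cleanest route, provided one verifies the key input that the relevant $k$-invariant of $P_2 BO(3) \to P_1 BO(3)$ is trivial.
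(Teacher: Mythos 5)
Your proposal is correct and follows essentially the same two-step strategy as the paper: pick $u$ (the paper's $z$) with $\beta^w(u)$ equal to the Poincar\'e dual of $a^2 = 2a$, deduce non-liftability from exactness, build a rank-$3$ bundle realising $(w,u)$, and verify $w_3(E)=0$ via the identity $\rho_2\beta^w = Sq^1 + w\cup(\,\cdot\,)$. The one place where you diverge is the realisability step: the paper simply cites Teichner's proof of \cite[Lemma 2]{Teichner} for the statement that there are no obstructions, whereas you spell this out via the Postnikov tower of $BO(3)$, using $\pi_3 BO(3)=0$, $\dim M = 4$, and the vanishing of the first $k$-invariant to get $P_2 BO(3) \simeq K(\Z/2,1)\times K(\Z/2,2)$. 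That $k$-invariant argument deserves one more sentence to be airtight (e.g.\ observe that the $k$-invariant in $H^3(K(\Z/2,1);\Z/2)\cong\Z/2\langle w_1^3\rangle$ must pull back to zero in $H^3(BO(3);\Z/2)$, but $w_1^3\neq 0$ there, forcing the $k$-invariant to vanish; or equivalently note the splitting $BO(1)\hookrightarrow BO(3)$). Your closing remark, that any splitting $E\cong L\oplus\gamma$ would force $w_2(E)$ to reduce from a twisted Euler class, is a useful sanity check explaining why an abstract obstruction argument is genuinely needed here.
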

\begin{proof}
We use multiplicative notation for elements of $H_1(M)=\pi_1(M)_{\rm ab}$. The Poincar\'e dual of $a^2$ in $H^3(M;\Z^{w_1(M)})$ has order $2$, hence is the image of an element $z\in H^2(M;\Z/2)$ under the twisted Bockstein. As in Teichner's proof of \cite[Lemma 2]{Teichner},
there are no obstructions to constructing a $3$-bundle $E$ with $(w_1(E),w_2(E))=(w_1(M),z)$.

It remains to show that $w_3(E)=0$. This follows from Theorem 2.3 of \cite{Greenblatt}, which states that for any space $X$ and twisting $w\in H^1(X;\Z/2)$, the composition of the twisted Bockstein $\beta^w:H^i(X;\Z/2)\to H^{i+1}(X;\Z^w)$ with reduction mod $2$ is given by
\[
\rho_2 \circ \beta^w(z)=Sq^1(z) + zw.
\]
Hence we have
\begin{align*}
w_3(E) & = Sq^1(w_2(E)) + w_2(E) w_1(E) \\
      & = \rho_2 \circ \beta^{w_1(M)}(w_2(E))\\
      & = 0,
\end{align*}
since $\beta^{w_1(M)}(w_2(E)) = \beta^{w_1(M)}(z)$ is even.
\end{proof}

In order to find an example with $\bz(x^2) \notin \bz(x)H^2(N)$
it turns out to be sufficient that there is an element $a\in H_1(M)$ of order $4$ such that
$0\neq\tau_!(a^2)\in H_1(\hat{M})$, where $\tau_!:H_1(M)\to H_1(\hat{M})$ is the transfer associated to the orientation double cover $\tau:\hat{M}\to M$.

To this end, we shall use a closed connected $4$-manifold $M$ with
\[
\pi_1(M)=C_8\rtimes C_2 = \langle a, b \mid a^b=a^5,a^8,b^2\rangle
\]
and with $w_1(M):\pi_1(M)\to C_2$ the projection onto the base of the semi-direct product. Note that
\[
H_1(M) = \langle a,b \mid a=a^5,a^8,b^2,[a,b]\rangle \cong C_4\times C_2
\]
has an element $a$ of order $4$.
It is well-know, see e.g.~\cite[Propostion 11.75]{Ranicki}, that every
homomorphism $w \colon \pi \to \Z/2$ from a finitely
presented group $\pi$ arises as $(\pi_1(X), w_1(X))$ for a $4$-manifold $X$,
and so a $4$-manifold $M$ as above exists.

\begin{lem} \label{lem:tau}
The transfer homomorphism
$\tau_!:H_1(M)\to H_1(\hat{M})$ does not map the element $a^2\in H_1(M)$ to $0$.
\end{lem}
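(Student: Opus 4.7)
The plan is to identify the orientation double cover $\hat{M}\to M$ explicitly and then compute $\tau_!(a^2)$ directly. Since $w_1(M)\colon \pi_1(M)=C_8\rtimes C_2\to C_2$ is projection onto the $C_2$ quotient, its kernel is the normal subgroup $\langle a\rangle\cong C_8$. Hence $\pi_1(\hat{M})=\langle a\rangle$ and, as this group is already abelian, $H_1(\hat{M})\cong C_8$. In particular $a^2\in \pi_1(M)$ already lies in $\pi_1(\hat{M})$, so any loop $\gamma$ in $M$ representing $a^2$ admits two closed lifts to $\hat{M}$.

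Next I will invoke the standard description of the transfer on singular chains for a finite cover: the transfer of a $1$-cycle $c$ in $M$ is represented by its full preimage $\tau^{-1}(c)$ in $\hat{M}$. Applied to a loop $\gamma$ whose class lies in $\pi_1(\hat{M})\subset \pi_1(M)$, the two sheets of $\tau^{-1}(\gamma)$ are each closed lifts of $\gamma$, each representing the class of $\gamma$ viewed as an element of $\pi_1(\hat{M})$. Consequently, via the Hurewicz isomorphism,
\[
\tau_!([\gamma])=2[\gamma]\in H_1(\hat{M}).
\]
Taking $\gamma$ to represent $a^2$ therefore yields $\tau_!(a^2)=2\cdot a^2 = a^4$ in $H_1(\hat{M})=C_8$. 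Since $a$ has order $8$ in $\pi_1(\hat{M})$, the element $a^4$ is the unique element of order $2$ of $C_8$, and in particular is nonzero, which is what we needed.

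The only real obstacle is fixing the conventions for the transfer and justifying the formula $\tau_!([\gamma])=2[\gamma]$ in the case where $[\gamma]$ lifts; once that is in place, the computation is a one-line verification, whose arithmetic content is simply $2\cdot 2=4\not\equiv 0\pmod 8$. As an independent sanity check, the classical group-theoretic transfer $\pi_1(M)^{{\rm ab}}\to \pi_1(\hat{M})^{{\rm ab}}$ can be computed with right coset representatives $\{1,b\}$: the identity $b^{-1}a^2 b=(b^{-1}ab)^2 = a^{10}=a^2$ shows that $a^2$ commutes with $b$, so both cosets contribute a factor of $a^2$, recovering $\tau_!(a^2)=a^4$.
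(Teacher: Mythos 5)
Your final answer is correct, and your ``sanity check'' coincides with the paper's actual proof, which computes the transfer group-theoretically using coset representatives $\{1,b\}$ and the relation $b^{-1}a^2b = a^2$. However, the main geometric argument you present first contains a genuine gap: the formula $\tau_!([\gamma]) = 2[\gamma]$ is \emph{not} generally valid for a loop $\gamma$ whose class lies in $\pi_1(\hat{M})\subset\pi_1(M)$. The two components of $\tau^{-1}(\gamma)$ are exchanged by the nontrivial deck transformation $\sigma$, so they represent $[\gamma]$ and $\sigma_*[\gamma]$ respectively, which in general are different elements of $H_1(\hat{M})$; the transfer is $\tau_!([\gamma]) = [\gamma] + \sigma_*[\gamma]$. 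To see this can be nonzero, take $M$ the Klein bottle, $\hat{M}=T^2$, $\gamma$ the fibre circle: then $\sigma_*[\gamma] = -[\gamma]$ and the transfer vanishes even though $[\gamma]\neq 0$. (Even in the present example, $\tau_!(a) = a\cdot bab^{-1} = a\cdot a^5 = a^6 \neq a^2$, so the formula $\tau_!(g)=g^2$ fails for $g=a$.)

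What rescues your computation for $g=a^2$ is precisely the observation buried in your sanity check: $b^{-1}a^2b = a^{10} = a^2$, so $a^2$ is fixed by $\sigma_*$, and only then does $\tau_!(a^2) = a^2 + \sigma_*(a^2) = 2a^2 = a^4$ hold. The logical structure of your write-up should therefore be inverted: the group-theoretic coset computation (or equivalently the verification that the deck transformation fixes $a^2$) is the substance of the proof, not a redundant confirmation of it.
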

\begin{proof}
Let $G=\pi_1(M)$ and let $H=\ker(w_1(M)) = C_8$, so that $[G:H]=2$. The definition of the transfer in terms of coset representatives gives
\[
\tau_!: G_{\rm ab} \to H_{\rm ab},\qquad g[G,G]\mapsto g^2[H,H].
\]
Therefore $\tau_!(a^2)=a^4\neq 0$ as claimed.
\end{proof}

Before continuing, we record the following lemma which will be useful in the proof
of Proposition \ref{prop:T} below.

\begin{lem}[{\cite[VII.8.10]{Dold}}] \label{lem:connecting}
Let $i:A\to X$ denote the inclusion of a CW-pair $(X,A)$, and let $\delta: H^*(A)\to H^{*+1}(X,A)$ be
the connecting homomorphism in the long exact cohomology sequence (with any coefficients). Then for all $x\in H^*(A)$ and $y\in H^*(X)$ we have
\[
\pushQED{\qed}
\delta(x i^*(y)) = \delta(x) y. \qedhere
\popQED
\]
\end{lem}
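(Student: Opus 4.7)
The plan is to prove this by a direct cochain-level computation, essentially reducing the statement to the Leibniz rule for the cup product together with the definition of the connecting homomorphism via the short exact sequence of cochain complexes
\[
0 \longrightarrow C^*(X,A) \longrightarrow C^*(X) \xrightarrow{i^\#} C^*(A) \longrightarrow 0.
\]
Let me outline the steps in order.

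First I would fix cocycle representatives $x \in C^p(A)$ and $y \in C^q(X)$ for the given cohomology classes, and choose a cochain lift $\tilde{x} \in C^p(X)$ with $i^\#(\tilde{x}) = x$. By the very definition of the Bockstein-style connecting map $\delta$, the coboundary $d\tilde{x} \in C^{p+1}(X)$ is the image of a (unique) cochain in $C^{p+1}(X,A)$ representing $\delta[x]$. Next, I would observe that $\tilde{x} \smile y \in C^{p+q}(X)$ restricts on $A$ to $i^\#(\tilde{x}) \smile i^\#(y) = x \smile i^*(y)$, so $\tilde{x} \smile y$ is a cochain lift of the class $x \smile i^*(y) \in H^{p+q}(A)$.

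The key step is then to apply the Leibniz rule
\[
d(\tilde{x} \smile y) \;=\; d\tilde{x} \smile y \;+\; (-1)^{p}\,\tilde{x} \smile dy \;=\; d\tilde{x} \smile y,
\]
using that $y$ is a cocycle. Thus $d(\tilde{x} \smile y)$ is a cochain in $C^{p+q+1}(X,A)$ (being a cup product with something that already vanishes on $A$), and it represents $\delta(x \smile i^*(y))$ on the one hand, and $\delta[x] \smile [y]$ on the other, because the relative cup product $H^{p+1}(X,A) \otimes H^q(X) \to H^{p+q+1}(X,A)$ is induced at the cochain level by exactly the same formula.

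The only subtle point, which I would address briefly, is the compatibility of the cellular/singular cup product with the relative cup product, and the verification that the cochain-level cup product of a cochain in $C^*(X,A)$ with one in $C^*(X)$ lands in $C^*(X,A)$; both are standard and can be cited from Dold \cite[VII.8]{Dold}. I do not expect any genuine obstacle here — the result is classical, and the whole argument amounts to unwinding the definitions and applying the graded Leibniz rule once.
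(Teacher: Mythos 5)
Your proof is correct. The paper offers no argument for this lemma — it is cited directly from Dold \cite[VII.8.10]{Dold} — and your cochain-level argument (choose a cochain lift $\tilde{x}$ of the cocycle $x$, apply the Leibniz rule with $dy = 0$ to get $d(\tilde{x}\cup y) = d\tilde{x}\cup y$, and observe that this relative cochain simultaneously represents $\delta(x\, i^{*}y)$ and $\delta(x)\cup y$) is precisely the standard proof that Dold's reference contains. The one point you rightly flag, that $u \cup v$ lies in $C^{*}(X,A)$ whenever $i^{\#}(u)=0$ since $i^{\#}(u\cup v)=i^{\#}(u)\cup i^{\#}(v)=0$, closes the only gap, so the argument is complete.
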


The following proposition proves Theorem \ref{thm:T1}(ii).

\begin{prop} \label{prop:T}
Let $(N, x)$ be a Teichner pair over a non-orientable $4$-manifold
$M$ with $w_1(M) \colon \pi_1(M) \to \Z/2$ as above.
Then $\bz(x^2) \notin \bz(x)H^2(N)$.

Furthermore, the element $\alpha = \bz(x)\in TH^3(N)$ has $\per(\alpha)=2$
and $\ind(\alpha)=8$.
\end{prop}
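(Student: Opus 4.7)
The plan is to reduce the non-containment $\bz(x^2) \notin \bz(x) H^2(N)$ to a cup-product statement in the twisted cohomology of $M$, which can then be contradicted using Lemma \ref{lem:tau}. Once the non-containment is established, the period and index claims are formal consequences. Indeed, $\alpha = \bz(x)$ is $2$-torsion and nonzero---if $\alpha = 0$, then $x$ lifts to an integral class $\tilde x$, and Lemma \ref{lem:Bprod} gives $\bz(x^2) = \bz(x \cdot \rho_2(\tilde x)) = \bz(x)\tilde x = 0$, contradicting the Teichner-pair hypothesis---so $\per(\alpha) = 2$. The formula $\ind(\alpha) = \ord(\wt Q(x))\per(\alpha)$ from \cite[Theorem A]{AW2}, combined with $4\wt Q(x) = 0$ (automatic) and $2\wt Q(x) = [\bz(x^2)]$ (from \eqref{eq:beta}), will then force $\ord(\wt Q(x)) = 4$ and $\ind(\alpha) = 8$, exactly as in the proof of Proposition \ref{prop:tbeta}.

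For the main reduction, let $i\colon N = SE \hookrightarrow DE$ be the inclusion of the sphere bundle into the disk bundle, with connecting map $\delta\colon H^*(N) \to H^{*+1}(DE, N)$. Since $DE$ deformation retracts onto the $4$-manifold $M$, the groups $H^5(DE)$ and $H^6(DE)$ vanish, so the long exact sequence of the pair forces $\delta\colon H^5(N) \xrightarrow{\cong} H^6(DE, N)$. The twisted Thom isomorphism $H^k(DE, N) \cong H^{k-3}(M; \Z^{w_1(M)})$ (recall $w_1(E) = w_1(M)$) and twisted Poincar\'e duality for the non-orientable $M$ then identify
\[
\delta \colon H^5(N) \xrightarrow{\;\cong\;} H^3(M; \Z^{w_1(M)}) \cong H_1(M).
\]
The same exact sequence in degree $2$ makes $i^*\colon H^2(M) \to H^2(N)$ surjective (in fact an isomorphism), using that $H^{-1}(M; \Z^{w_1(M)})$ and $H^0(M; \Z^{w_1(M)})$ both vanish for non-orientable $M$. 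By Lemma \ref{lem:connecting}, writing $e = i^*\bar e$ with $\bar e \in H^2(M)$ yields $\delta(\bz(x) e) = \delta(\bz(x))\cdot \bar e$, so the non-containment reduces to
\[
\delta(\bz(x^2)) \notin \delta(\bz(x))\cdot H^2(M) \subseteq H^3(M; \Z^{w_1(M)}) \cong H_1(M).
\]

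The technical heart of the proof, and the main obstacle, is to identify both classes explicitly from the construction of $E$ in Lemma \ref{lem:TeichnerExtension}. I expect $\delta(\bz(x)) \in H^1(M; \Z^{w_1(M)})$ to be identifiable as a twisted Bockstein witnessing $w_2(E)$, and $\delta(\bz(x^2)) \in H^3(M; \Z^{w_1(M)}) \cong H_1(M)$ to correspond (under twisted Poincar\'e duality) to the class $a^2$, where $a$ is the distinguished order-$4$ element of $H_1(M)$ whose twisted Poincar\'e dual is engineered in Lemma \ref{lem:TeichnerExtension} to agree with the twisted Bockstein of $w_2(E)$. To verify that $a^2$ is not contained in the cup-product image $\delta(\bz(x))\cdot H^2(M)$, I will pass to the orientation double cover $\tau\colon \hat M \to M$ and apply transfer naturality: a putative containment on $M$ would descend to a containment on $\hat N$ already accounted for by Proposition \ref{prop:tbeta} (since $\hat M$ is orientable and so $\hat N$ is \spc), but Lemma \ref{lem:tau} provides $\tau_!(a^2) = a^4 \neq 0 \in H_1(\hat M)$, whose non-vanishing obstructs the existence of such a class upstairs. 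Making this transfer argument precise, together with the two explicit identifications above, is where the real work lies.
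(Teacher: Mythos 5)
Your reduction framework is sound: via the long exact sequence and twisted Thom isomorphism, $\delta\colon H^5(N)\xrightarrow{\cong} H^6(DE,N)\cong H^3(M;\Z^{w_1(M)})\cong H_1(M)$ and $i^*\colon H^2(M)\xrightarrow{\cong} H^2(N)$, so the non-containment does reduce to a statement in $H^3(M;\Z^{w_1(M)})$, and the formal deductions of $\per(\alpha)=2$ and $\ind(\alpha)=8$ at the end are correct (including your nice observation that $\alpha\neq 0$ because otherwise $\bz(x^2)$ would vanish). However, as you acknowledge, the proposal is a plan rather than a proof, and the parts you defer are exactly the substance of the argument. Let me flag where you would run into trouble.

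First, a degree error: you write that $\delta(\bz(x))\in H^1(M;\Z^{w_1(M)})$ should be ``a twisted Bockstein witnessing $w_2(E)$''. Under the Thom isomorphism, $\delta(\bz(x))=\bz(\delta(x))=\bz(t_E)$ corresponds to $\beta^{w}(1)\in H^1(M;\Z^{w})$, which is \emph{not} $\beta^{w}(w_2(E))\in H^3(M;\Z^{w})$. What is true, and what the paper extracts, is that $\delta(\bz(x))\cdot H^2(M)$ corresponds under Thom to $\beta^{w}(\rho_2(H^2(M)))\subseteq H^3(M;\Z^{w})$, while $\delta(\bz(x^2))=\bz(Sq^2(t_E))=\bz(w_2(E)t_E)$ corresponds to $\beta^{w}(w_2(E))$; the needed identity is $Sq^2(t_E)=w_2(E)t_E$, which you do not mention. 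Your route also implicitly requires that the twisted integral Bockstein commutes with the (appropriately twisted) Thom isomorphism, a point that needs justification; the paper sidesteps this by working one degree lower with $\Z/2$ coefficients, where only commutativity of the Thom isomorphism with $\rho_2$ is needed, and deriving $\beta^{w}(w_2(E))=\beta^{w}(\rho_2(y))$ from the Bockstein long exact sequence.

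Second, the final transfer step as you describe it is not correct. You suggest that ``a putative containment on $M$ would descend to a containment on $\hat N$ already accounted for by Proposition~\ref{prop:tbeta},'' but Proposition~\ref{prop:tbeta} is a \emph{positive} statement (containment does hold over orientable bases) and gives no obstruction; there is also no need to pass to the six-manifold $\hat N$ at all. The actual contradiction lives in $H^3(\hat M)$: apply $\tau^*$ to the equality $\beta^{w}(w_2(E))=\beta^{w}(\rho_2(y))$. On $\hat M$ the twist trivializes, so $\tau^*\beta^{w}(\rho_2(y))=\bz(\rho_2(\tau^*y))=0$, whereas by the commutativity of $\tau^*$ with twisted Poincar\'e duality and the transfer $\tau_!$, the class $\tau^*\beta^{w}(w_2(E))$ is Poincar\'e dual to $\tau_!(a^2)\neq 0$ (Lemma~\ref{lem:tau}). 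That $\tau^*$-square and the dual $\tau_!$-square are the missing pieces, and they are what make Lemma~\ref{lem:tau} bite. As written, your proposal gestures at the right ingredients but does not assemble them into a contradiction.
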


\begin{proof}
We first prove that $\bz(x^2) \notin \bz(x)H^2(N)$.
Suppose towards a contradiction that $\beta^{\Z/2}(x^2) = \beta^{\Z/2}(x) Y$ for some $Y\in H^2(N)$. Let $i:N\hookrightarrow DE$ be the inclusion of the unit sphere bundle in the unit disc bundle of $E$. From the long exact sequence of the pair $(DE,N)$, the twisted Thom isomorphism $H^3(DE,N)\cong H^0(M;\Z^w)$ and the fact that $M$ is non-orientable, we see that $i^*:H^2(DE)\to H^2(N)$ is surjective.  Hence $Y=i^*(y)$ for some $y\in H^2(DE)\cong H^2(M)$.

Let $t_E^w\in H^3(DE,N;\Z^w)$ be the twisted Thom class of $E$, and
$t_E\in H^3(DE,N;\Z/2)$ its mod $2$ reduction. From the fact that $x$ restricts to a generator in each fibre, it follows that $t_E=\delta(x)$, where $\delta: H^*(N;\Z/2)\to H^{*+1}(DE,N;\Z/2)$ is the connecting homomorphism (see the proof of Lemma 1 in \cite{Teichner}).
%
Now we have
\[
\delta(x^2) = \delta(Sq^2(x)) = Sq^2(\delta(x)) = Sq^2(t_E) = w_2(E) t_E
\]
and since Bocksteins commute with connecting homomorphisms
\[
\delta(\bz(x^2)) = \bz(\delta(x^2)) = \bz(w_2(E)t_E).
\]
On the other hand, $\bz(x^2) = \bz(x)i^*(y)$ and so
\begin{align*}
 \delta(\bz(x^2)) &  = \delta(\bz(x)i^*(y)) \\
                 & = \delta(\bz(x i^*(\rho_2(y)))) \\
                 & = \bz(\delta (x i^*(\rho_2(y)))) \\
                 & = \bz(\delta(x)\rho_2(y)) \\
                 & = \bz(t_E \rho_2(y)).
 \end{align*}
 Here we have used Lemma \ref{lem:Bprod} and Lemma \ref{lem:connecting}.

The above shows that $\bz(w_2(E) t_E) = \bz(t_E \rho_2(y))$, or equivalently $t_E(w_2(E)-\rho_2(y))$ is the reduction of an integral class. From the square
\[
\xymatrix{
H^5(DE,N) \ar[r]^-{\rho_2} & H^5(DE,N;\Z/2) \\
H^2(M;\Z^w) \ar[u]^{\cup t_E^w}_\cong \ar[r]^-{\rho_2} & H^2(M;\Z/2), \ar[u]^{\cup t_E}_\cong
}
\]
which commutes since the Thom isomorphisms commute with reduction mod $2$, we see that $w_2(E)-\rho_2(y)$ is the reduction of a twisted integral class, or equivalently
\[
\beta^w(w_2(E))=\beta^w(\rho_2(y)).
\]

Next we lift this equation to the orientation cover, using the commutative square
\[
\xymatrix{
H^2(\hat{M};\Z/2) \ar[r]^(0.575){\beta^{\Z/2}} & H^3(\hat{M}) \\
H^2(M;\Z/2) \ar[u]^{\tau^*} \ar[r]^-{\beta^w} & H^3(M;\Z^w) \ar[u]^{\tau^*}
}
\]
to conclude that
\[
\tau^*\beta^w(w_2(E)) = \tau^*\beta^w(\rho_2(y)) = \beta^{\Z/2}(\tau^*(\rho_2(y))) = \beta^{\Z/2}\rho_2(\tau^*(y)) = 0.
\]
However, Poincar\'e duality gives a commutative square
\[
\xymatrix{
H^3(M;\Z^w) \ar[r]^-{\tau^*} \ar[d]_{\cap [M]_w}^\cong & H^3(\hat{M})\ar[d]_{\cap[\hat{M}]}^\cong \\
H_1(M) \ar[r]^-{\tau_!} & H_1(\hat{M}) .
}
\]
 Since the bundle $E$ was chosen as in Lemma \ref{lem:TeichnerExtension} so that $\beta^w(w_2(E))\cap [M]_w=a^2$, and
 $\tau_!(a^2)\neq 0$ by Lemma \ref{lem:tau},
 we see that $\tau^*\beta^w(w_2(E))\neq 0$, a contradiction.

To prove the second statement, we have
$\per(\alpha) = 2$
and since $\bz(x^2) \notin \bz(x)H^2(N)$,
$$
2 \wt Q(x)=[\beta^{\mathbb{Z}/4}(P_2(x))] =
[2\beta^{\Z/4}(P_2(x))] =
[\bz(x^2)] \neq 0.$$
Hence $\ord(\wt Q(x)) = 4$.  As $\ind(\alpha) = \ord(\wt Q(x)) \per(\alpha)$
by \cite[Theorem A]{AW2},
$\ind(\alpha) = 8$.
 %
 \end{proof}
%


%

%


\end{document}